\journalname{JOTA}
\newtheorem{assumption}{Assumption}
\newcommand{\norm}[1]{\textstyle{\Vert} #1 \textstyle{\Vert}}
\newcommand{\col}[1]{\left\{#1\right\}}
\newcommand{\co}{{\tt co}}
\newcommand{\tol}{\texttt{Tol}}
\newcommand{\inner}[2]{\langle#1,#2\rangle}
\newcommand{\E}{\mathbb{E}}
\newcommand{\R}{\Re}
\newcommand{\ind}{{\rm i}}
\newcommand{\pclarke}{\partial^{\tt C}}
\newcommand{\ff}{\mathsf{f}}
\renewcommand{\H}{\check{\mathcal{M}}}
\newcommand{\Model}{\mathcal{M}}
\newcommand{\f}{\mathfrak{f}}
\renewcommand{\c}{\mathfrak{c}}
\newcommand{\ReInf}{\mathbb{R}\cup\{+\infty\}}
\newcommand{\x}{\bar{x}}
\newcommand{\epi}[1]{\operatorname{epi}\left(#1\right)}
\newcommand{\LimInn}[1]{\operatorname{LimInn}\left(#1\right)}
\newcommand{\LimOut}[1]{\operatorname{LimOut}\left(#1\right)}
\renewcommand{\Re}{\mathbb{R}}
\newcommand{\mybox}{{\hfill $\square$}}
\newcommand{\wlo}[1]{#1}
\definecolor{gris}{gray}{0.6}
\newcommand{\mytriangle}{{\color{gris}\hspace*{\fill}\scriptsize\ensuremath{\blacktriangleleft}}}
\newtheorem{prop}{Proposition}
\begin{document}
\title{A Proximal-Type Method for Nonsmooth and Nonconvex Constrained Minimization Problems}
\titlerunning{A Proximal-Type Method}

\author{Gregorio M. Sempere$^1$, Welington de Oliveira$^1$, Johannes O. Royset$^2$}
\authorrunning{G.M. Sempere et al.}

\institute{$1$ MINES ParisTech, PSL -- Research University, CMA -- Centre de Math\'ematiques Appliqu\'ees,  France \\
 \email{\url{gregorio.martinez_sempere@minesparis.psl.eu}}\\
$2$ University of Southern California
}

\date{\today}

\maketitle

\begin{abstract}
This work proposes an implementable proximal-type method for a broad class of optimization problems involving nonsmooth and nonconvex objective and constraint functions. In contrast to existing methods that rely on an ad hoc model approximating the nonconvex functions, our approach can work with a nonconvex model constructed by the pointwise minimum of finitely many convex models. The latter can be chosen with reasonable flexibility to better fit the underlying functions' structure. We provide a unifying framework and analysis covering several subclasses of composite optimization problems and show that our method computes points satisfying certain necessary optimality conditions, which we will call model criticality.  Depending on the specific model being used, our general concept of criticality boils down to standard necessary optimality conditions. Numerical experiments on some stochastic reliability-based optimization problems illustrate the practical performance of the method.
\end{abstract}

\keywords{Composite Optimization \and Nonsmooth Optimization \and Nonconvex Optimization \and Variational Analysis}
\subclass{49J52 \and 49J53 \and  49K99 \and 90C26}

\section{Introduction}\label{sec:introduction}

Modern optimization problems, such as those in stochastic optimization, machine learning, uncertainty quantification, and others, often involve functions that fail to be smooth and convex. Nevertheless, many such problems possess some structure that can be exploited in numerical optimization. Existing nonsmooth and nonconvex optimization methods typically rely on an ad hoc model exploiting a specific (composite) structure \cite{Burke_1995,Sagastizabal_2013,Lewis_2016,Cui_Pang_Sen_2018,Liu_Cui_2020,Cui_Pang_Liu_2022,AragonPerezTorregrosa_2023}. Most such methods deal with the convex-constrained setting.
For instance, \cite{Burke_1985,Burke_1995,Sagastizabal_2012} exploits convexity and smoothness of certain components functions yielding the composite one, \cite{Lewis_2016} and \cite{AragonPerezTorregrosa_2023} profit from a certain degree of smoothness, \cite{Pham_2005,Oliveira_2019,Cui_Pang_Sen_2018} from a Difference-of-Convex structure, and \cite{Ackooij_Oliveira_2020} exploits the difference-of-locally-Lipschitz format. 
A framework for constructing and analyzing approximations of optimization problems with a composite structure is given in \cite{Royset_2023}, and \cite[Chapter 7]{Cui_Pang_2022} presents several algorithms based on the principle of surrogation.

This work considers a broader class of composite optimization problems involving nonsmooth and nonconvex objective and constraint functions. The investigated setting exploits the available structure so that the problem can be locally approximated by a model even if subgradients are unavailable. This is the case for some composite functions whose first-order information is available for the components but not necessarily for the composition. Applications include Difference-of-Convex (DC) problems, classification, and sample average approximations of chance-constrained and buffered probability optimization problems.

Some optimization algorithms exist for handling different families of problems fitting into our setting.
For instance, \cite{Apkarian2008} deals with the so-called mixed $H_2/H_\infty$ control problem, \cite{Cui_Pang_Liu_2022} focuses on the particular case of chance-constrained problems, and the DC-constrained case is investigated in \cite{Pham_Penalization_2012,Strekalovsky_Minarchenko_2017,Pang_2017,Montonen_Joki_2018,Cui_2018,Javal_2021}. A broader setting than DC was recently investigated in \cite{Ksenia_2023}, where the objective and constraint functions can be expressed as the difference of convex and weakly-convex functions.
 These papers build different models to provide implementable algorithms in their respective settings. The main idea is to exploit the functions' structure to approximate the problem locally. For instance, complicating (component) functions are linearized in \cite{Pham_Penalization_2012,Strekalovsky_Minarchenko_2017,Pang_2017,Montonen_Joki_2018,Cui_2018,Javal_2021,
 Ksenia_2023} whereas higher-order approximations are considered in \cite{AragonPerezMordukhovich_2023} under structured nonsmoothness. In addition, these models must accommodate nonconvex constraints. Standard ways to face constrained optimization problems are through penalization \cite{Pham_Penalization_2012,Byun_Oliveira_Royset_2023} and \cite[\S  9]{Cui_Pang_2022}, linearizations of constraints \cite{Pang_2017,Cui_2018,Ackooij_Oliveira_OMS_2019}, and, as considered in this work,  the so-called \textit{improvement function} \cite{Montonen_Joki_2018,Javal_2021,Sagastizabal_Solodov_2005,Apkarian2008} that jointly combines objective and constraint functions. 
 
 Each of these references develops proximal-type algorithms to compute a point satisfying some necessary optimality condition denoted by criticality. 
 Many definitions of criticality exist in nonsmooth and nonconvex optimization, which are closely related to the working model for the problem.
 Therefore, each of these references exploits a model and provides a dedicated algorithm and convergence analysis. Differently, this work proposes a general yet implementable proximal-type algorithm with guaranteed convergence to critical points. Our study is comprehensive, and the type of criticality we get depends explicitly on the model's properties. The latter can be chosen with reasonable freedom as long as some conditions hold to ensure its tractability. 
 
In a nutshell, our approach is based on a nonconvex model (for the improvement function) constructed by the pointwise minimum of finitely many \emph{convex models}, which are obtained using the available information about the functions. For instance,  if some of the available oracles for the component functions can provide us with more than one subgradient for every given point, we might benefit from this fact to construct more than one convex model approximating the functions locally. 
The pointwise minimum of such convex models leads to nonconvex but computationally solvable subproblems yielding new iterates.
 Thanks to this more general model, our method and analysis cover several classes of (nonlinearly constrained) composite optimization problems.
Our analysis is based on the subdifferentiability (see \cite[\S 2]{Clarke_1987}, \cite[\S  4]{RoysetWets_2022} and \cite[\S  4]{Cui_Pang_2022}) and  epigraphical nesting (see \cite[\S 6.C]{RoysetWets_2022} and \cite{HigleSen_1992})  theories. The latter is a known relaxation of epi-convergence (see \cite[\S  5]{RoysetWets_2022} and \cite[\S  7]{Rockafellar_Wets_2009}) with a broader field of use that allows us to accommodate the convergence analysis of implementable algorithms, such the one proposed in Section \ref{sec:algo} below.

The manuscript is organized as follows. Section \ref{sec:pre}  recalls standard definitions and fundamental concepts for our development, such as the improvement function and epigraphical nesting. Section \ref{sec:crit} presents the basic assumptions of the working model, together with the type of criticality to attain. Our algorithm is given in Section \ref{sec:algo} and its analysis relies on epigraphical nesting. Sections \ref{sec:appl}, \ref{sec:dcset}, and \ref{sec:comp} present some examples of classes of functions for which we can provide implementable models for our algorithm.
Section \ref{sec:num} illustrates the numerical performance of our approach with two stochastic optimization problems. Finally, Section \ref{sec:conc} closes the manuscript with some concluding remarks.

\paragraph{Notation:}
We consider $f:\R^n\to \R$ as a real-valued function, while we use $\ff:\R^n\to\ReInf$ to refer to an extended real-valued function with a nonempty domain. We denote by $X$ a convex and closed set and by ${\rm i}_X$ the \textit{indicator function} of $X$, that equals $0$ if $x\in X$ and equals $+\infty$ otherwise. Also, we denote by ${\pmb 1}_A$ the \textit{characteristic function} of $A$, that equals $1$ if $x\in A$ and equals $0$ otherwise. The \textit{normal cone of the \emph{(closed and convex)} set $X$ at $\x\in X$} is
$$N_X(\x)=\{\xi\in\R^n:\inner{\xi}{x-\x}\leq 0,\;\forall x\in X\},$$
and $N_X(\x)=\emptyset$ if $\x\notin X$.
Given an optimization problem such as $\min_{x\in X}f(x)$, where $f$ is locally Lipschitz, we say that $\x\in X$ is a \textit{stationary point} if $0\in \pclarke f(\x)+N_X(\x)$, where 
$$
\pclarke f(\x)=\co\{\xi\in\R^n:\exists\; x^k\to\x, \nabla f(x^k)\to\xi\}
$$
is the \textit{Clarke subdifferential of $f$ at $\x$}. If $f$ happens to be convex, it is well-known that $\pclarke f(\x)=\partial f(\x)$, where
$$
\partial f(\x)=\{\xi\in\R^n:\inner{\xi}{x-\x}\leq f(x)-f(\x), \forall x\in\R^n\}.
$$
We consider the \textit{tangent cone to a closed set $A$ at a point $\x\in A$} in the Bouligand sense:
$$\mathcal{T}_{A}(\x)=\left\{d\in\R^n:\exists \{y^k\}_k\subseteq A\;,t_k>0\;, y^k\to \x\;,\;t_k\to0:d=\lim_{k\to\infty}\frac{y^k-\x}{t_k}\right\}.$$
We denote the \textit{epigraph} of a function $\ff:\R^n\to\ReInf$ by $\epi{\ff}=\{(x,\alpha)\in\R^{n+1}:\ff(x)\leq \alpha\}$, and the \textit{inner} and \textit{outer limits} of sequence of sets $D_k\subseteq\R^n$ by $\LimInn{D_k}=\{\x\in\R^n:\exists y^k\in D_k, y^k\to\x\}$ and $\LimOut{D_k}=\{\x\in\R^n:\exists \mathcal{K}\subset\mathbb N,\; y^k\in D_k, \lim_{k\in\mathcal{K}\to\infty}y^k=\x\}$.

\section{Problem Statement and Prerequisites}\label{sec:pre}
Given locally Lipschitz continuous functions $f,c:\R^n\to\R$  and $\emptyset\neq X\subseteq\R^n$, a convex and closed set, we are interested in optimization problems such as
\begin{equation}\label{pb}
\min_{x\in X} \; f(x) \quad
    \text{s.t.}\quad c(x)\leq 0.
\end{equation}
We assume that the domains of both $f$ and $c$ are the whole space $\R^n$, however, this could be relaxed to the set $X$ being contained in the interior of both domains. Functions $f$ and $c$ need not be convex or differentiable, which makes this problem challenging from the numerical optimization point of view unless some favorable structure is assumed. Even when the problem is well-structured, e.g., DC problems, computing a solution with a certificate of global optimality is out of reach in real-life applications such as those coming from stochastic optimization or machine learning. This is why much research focuses on local-solution methods, i.e., algorithms capable of computing a point satisfying certain necessary optimality conditions, denoted by criticality.
What we mean by \textit{criticality} is specified in Section \ref{sec:crit} below.

\paragraph{\textbf{Improvement function}.}
We now recall an important mathematical tool in nonlinearly-constrained optimization, the so-called  \textit{improvement function} \cite{Sagastizabal_Solodov_2005}:
\begin{equation}\label{eq:IF}
H(y;x)=\max\{f(y)-\tau_f(x)\;,\;c(y)\}\quad \mbox{with}\quad
\tau_f(x)=f(x)+\rho [c(x)]_+
\end{equation}
and given parameter $\rho\geq 0$. 
 We highlight that dealing with a difficult constraint via an improvement function is a well-known and successful strategy in the nonsmooth optimization literature \cite{Sagastizabal_Solodov_2005,Apkarian2008,Montonen_Joki_2018,Javal_2021,KseniaPaul_2023}. 
 It is an important alternative to penalty approaches, which generally require laborious work of (penalty) parameter tuning.
 In contrast, the parameter $\rho$ is simple to tune as it
 can be chosen such that the magnitudes of both functions $f$ and $c$ are close\footnote{ For instance, a possible choice for $\rho$ is $\rho=|f(x^0)|/(1+|c(x^0)|)$, with $x^0 \in X$ being a given initial point to the optimization algorithm.}.
The following properties of $H$, which are trivial extensions of \cite[Lem. 5.1]{Apkarian2008} to our setting,  are handy. 
\begin{lemma}[Lem. 5.1 in \cite{Apkarian2008} and Lem. 1 in \cite{Javal_2021}.]\label{lemma:ineq}  The following hold.
\begin{enumerate}[label=(\roman*), start=1]
    \item Let $\mu\geq 0$ and $x,y$ be two points in $X$ satisfying $H(y;x)+\frac{\mu}{2}\norm{y-x}^2\leq H(x;x)$:   
    \begin{enumerate}[label={(i.\arabic*)}]
    \item If $c(x)\leq 0$, then $c(y)+\frac{\mu}{2}\norm{y-x}^2\leq 0$ and $f(y)+\frac{\mu}{2}\norm{y-x}^2\leq f(x)$.
    \item If $c(x)>0$, then $c(y)+\frac{\mu}{2}\norm{y-x}^2\leq c(x)$.
    \end{enumerate}
 \item If $\x \in X$ is a local solution to \eqref{pb}, then $\x$ is also a local
 minimizer of $H(x;\x)$ over $X$. 
  \end{enumerate}  \mytriangle 
\end{lemma}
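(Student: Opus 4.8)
The plan is to reduce everything to the definition of $H$ together with the elementary lower bounds $H(y;x)\geq c(y)$ and $H(y;x)\geq f(y)-\tau_f(x)$ that come for free from the maximum in \eqref{eq:IF}. The one preliminary computation I would carry out first is to evaluate $H$ on the diagonal. Setting $y=x$ in \eqref{eq:IF} gives $H(x;x)=\max\{-\rho[c(x)]_+,\,c(x)\}$, and a short case inspection shows that $H(x;x)=0$ whenever $c(x)\leq 0$ (then $[c(x)]_+=0$ and $c(x)\leq 0$) while $H(x;x)=c(x)$ whenever $c(x)>0$ (then $-\rho c(x)\leq 0<c(x)$). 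I would also record that $c(x)\leq 0$ forces $\tau_f(x)=f(x)$.

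For part (i), I would feed the two max-bounds into the hypothesis $H(y;x)+\frac{\mu}{2}\norm{y-x}^2\leq H(x;x)$. Since $H(y;x)\geq c(y)$, this immediately yields $c(y)+\frac{\mu}{2}\norm{y-x}^2\leq H(x;x)$; since $H(y;x)\geq f(y)-\tau_f(x)$, it yields $f(y)-\tau_f(x)+\frac{\mu}{2}\norm{y-x}^2\leq H(x;x)$. For (i.1) I substitute $H(x;x)=0$ and $\tau_f(x)=f(x)$ into these two inequalities to obtain exactly $c(y)+\frac{\mu}{2}\norm{y-x}^2\leq 0$ and $f(y)+\frac{\mu}{2}\norm{y-x}^2\leq f(x)$. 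For (i.2) I use only the first inequality with $H(x;x)=c(x)$, giving $c(y)+\frac{\mu}{2}\norm{y-x}^2\leq c(x)$. So part (i) is pure substitution once the diagonal value $H(x;x)$ is known.

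For part (ii), the key observation is that a local solution $\x$ of \eqref{pb} is feasible, so $c(\x)\leq 0$; by the diagonal computation this gives $H(\x;\x)=0$ and $\tau_f(\x)=f(\x)$, hence $H(x;\x)=\max\{f(x)-f(\x),\,c(x)\}$. I would then take a neighborhood $V$ of $\x$ on which $\x$ minimizes $f$ over the feasible points of $X$, and show $H(x;\x)\geq 0=H(\x;\x)$ for every $x\in V\cap X$ by splitting on the sign of $c(x)$: if $c(x)\leq 0$ then $x$ is feasible, so local optimality gives $f(x)-f(\x)\geq 0$ and the maximum is nonnegative; if $c(x)>0$ then the maximum is at least $c(x)>0$. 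Either way $H(\cdot;\x)$ cannot drop below its value at $\x$ on $V\cap X$, which is the claim. I expect no genuine obstacle here, as the whole argument is bookkeeping around the definition of $H$; the only point requiring care is that local optimality is available only against feasible competitors, which is precisely why the case split on the sign of $c(x)$ is needed.
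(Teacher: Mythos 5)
Your proof is correct and complete, including the one subtle point (local optimality of $\x$ only constrains feasible competitors, which your case split on the sign of $c(x)$ handles properly). The paper itself offers no proof of this lemma---it calls it a trivial extension of \cite[Lem.~5.1]{Apkarian2008} and \cite[Lem.~1]{Javal_2021}---and your argument (evaluate $H(x;x)$ on the diagonal by a case split on the sign of $c(x)$, feed the two lower bounds $H(y;x)\geq c(y)$ and $H(y;x)\geq f(y)-\tau_f(x)$ from the maximum in \eqref{eq:IF} into the descent hypothesis, and for (ii) use that $c(\x)\leq 0$ gives $H(x;\x)=\max\{f(x)-f(\x),c(x)\}$) is precisely the standard one in those references.
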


\paragraph{\textbf{Epi-convergence and epigraphical nesting}.}
We now recall the concept of \emph{epigraphical nesting}, which is a relaxation of \emph{epi-convergence}  \cite[\S 7]{Rockafellar_Wets_2009}. Epigraphical nesting was proposed in \cite{HigleSen_1992} as a tool for analyzing minimization algorithms (see also \cite[\S 6.C]{RoysetWets_2022}).

\begin{definition}[Def. 7.1 in {\cite{Rockafellar_Wets_2009}} and Thm. 4.15 in {\cite{RoysetWets_2022}}]\label{def:epi}
    Given $\ff^k,\ff:\R^n\to\ReInf$, the sequence $\{\ff^k\}_k$ is said to epi-converge to $\ff$, denoted by $\ff^k\xrightarrow{e}\ff$,  if the following condition holds:
$$
\LimOut{\epi{\ff^k}}\subseteq \epi{\ff} \subseteq \LimInn{\epi{\ff^k}}.
$$
    In that case, we refer to $\ff$ as the \textit{epi-limit} of the sequence $\{\ff^k\}_k$.\mytriangle
\end{definition}
Note that, by definition of inner and outer limits, the above inclusions are equivalent to all three sets to be equal. 
We choose the above characterization of epi-convergence as it gives a clear view of how it relates to epigraphical nesting. However, expressing these inclusions in terms of convergence of $\{\ff^k\}_k$ is handy for the following proofs.
\begin{lemma}\label{lemma:epi2}
    Given $f^k,f:\R^n\to\R$ and $X\subseteq\R^n$ a closed set, it holds:
    \begin{enumerate}[label=(\roman*)]
        \item $\LimOut{\epi{f^k+{\rm i}_X}}\subseteq \epi{f+{\rm i}_X}$ if, and only if, for all $x\in X$ and for each sequence $\{x^k\}_k\subseteq X$ such that $x^k\to x$,
        $$
        f(x)\leq\liminf_k f^k(x^k) .
        $$
        \item $\epi{f+{\rm i}_X}\subseteq \LimInn{\epi{f^k+{\rm i}_X}}$ if, and only if, for all $x\in X$ there exists a sequence $\{y^k\}_k\subseteq X$ such that $y^k\to x$ and
        $$
        \limsup_kf^k(y^k)\leq f(x).
        $$
    \end{enumerate}
\end{lemma}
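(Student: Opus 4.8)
The plan is to prove each equivalence by unwinding the definitions of the epigraph, the indicator function, and the inner/outer set limits, observing first that
$$\epi{f^k+{\rm i}_X}=\{(x,\alpha)\in\R^{n+1}:x\in X,\;f^k(x)\leq\alpha\},$$
and likewise for $f$. Thus a point of $\LimOut{\epi{f^k+{\rm i}_X}}$ is the limit, along some subsequence $\mathcal{K}$, of pairs $(x^k,\alpha^k)$ with $x^k\in X$ and $f^k(x^k)\leq\alpha^k$, whereas a point of $\LimInn{\epi{f^k+{\rm i}_X}}$ is a limit of such pairs along the full index set. Membership of a limit in $\epi{f+{\rm i}_X}$ splits into two requirements: that its horizontal component lie in $X$ (guaranteed by closedness of $X$) and that it satisfy the epigraph inequality for $f$.

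For part (i), I would argue the forward implication by contradiction: given $x\in X$ and $x^k\to x$ in $X$, if $f(x)>\liminf_k f^k(x^k)$ I would pick a finite $\alpha$ strictly between the two, extract a subsequence along which $f^k(x^k)<\alpha$, note that then $(x^k,\alpha)\in\epi{f^k+{\rm i}_X}$ converges to $(x,\alpha)$, conclude $(x,\alpha)\in\epi{f+{\rm i}_X}$ from the assumed inclusion, and read off the contradiction $f(x)\leq\alpha<f(x)$. Conversely, given the $\liminf$ condition and a point $(x,\alpha)\in\LimOut{\epi{f^k+{\rm i}_X}}$ realized along $\mathcal{K}$ by pairs $(x^k,\alpha^k)$, closedness of $X$ yields $x\in X$; I would then extend the subsequence $\{x^k\}_{k\in\mathcal{K}}$ to a full sequence in $X$ (filling the gaps with $x$ itself) so as to invoke the hypothesis, and chain $f(x)\leq\liminf_k f^k(x^k)\leq\liminf_{k\in\mathcal{K}}\alpha^k=\alpha$ to obtain $(x,\alpha)\in\epi{f+{\rm i}_X}$.

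For part (ii), the reverse implication is almost immediate: applying the inclusion to $(x,f(x))\in\epi{f+{\rm i}_X}$ produces pairs $(y^k,\alpha^k)\in\epi{f^k+{\rm i}_X}$ with $y^k\to x$ and $\alpha^k\to f(x)$, whence $\limsup_k f^k(y^k)\leq\limsup_k\alpha^k=f(x)$. For the forward implication, given $(x,\alpha)\in\epi{f+{\rm i}_X}$ I would take the sequence $y^k\to x$ supplied by the hypothesis and lift it by setting $\alpha^k=\max\{f^k(y^k),\alpha\}$, so that $(y^k,\alpha^k)\in\epi{f^k+{\rm i}_X}$, and verify $\alpha^k\to\alpha$ from the sandwich $\alpha\leq\alpha^k$ together with $\limsup_k\alpha^k=\max\{\limsup_k f^k(y^k),\alpha\}\leq\max\{f(x),\alpha\}=\alpha$.

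The main obstacle, and essentially the only place requiring genuine care, is the mismatch in part (i) between the subsequence nature of the outer limit and the full-sequence quantifier in the $\liminf$ condition; the extension-by-constant device handles it, while closedness of $X$ and the real-valuedness of $f$ (ensuring $f(x)$ is finite, so that the intermediate $\alpha$ exists) are what keep the vertical-coordinate bookkeeping honest.
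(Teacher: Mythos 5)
Your proof is correct, but it takes a genuinely different route from the paper's. The paper does not argue from the set-limit definitions at all: it invokes the standard characterization of these two epigraphical inclusions for extended-real-valued functions (Rockafellar--Wets, Prop.~7.2) applied to $f^k+{\rm i}_X$ and $f+{\rm i}_X$, and then reduces the lemma to the observation that one may restrict attention to points and sequences in $X$, since any sequence with terms (eventually or along a subsequence) outside $X$ makes the relevant $\liminf$/$\limsup$ inequalities hold trivially because the indicator contributes $+\infty$. You instead re-derive the Prop.~7.2 characterization from scratch: for (i) the contradiction argument with an intermediate finite level $\alpha$ strictly between $\liminf_k f^k(x^k)$ and $f(x)$, and conversely the chain $f(x)\leq\liminf_k f^k(z^k)\leq\liminf_{k\in\mathcal{K}}f^k(x^k)\leq\alpha$; for (ii) the evaluation at $(x,f(x))$ and the lift $\alpha^k=\max\{f^k(y^k),\alpha\}$. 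All steps check out: in particular, your extension-by-constant device is exactly what is needed to reconcile the subsequence quantifier in $\LimOut{\cdot}$ with the full-sequence quantifier in the $\liminf$ condition (it is valid because the $\liminf$ along a full sequence never exceeds that along a subsequence, and the filler point $x$ lies in $X$ by closedness), and your appeal to real-valuedness of $f$ is precisely where finiteness of $f(x)$ is used to choose $\alpha$. What the paper's route buys is brevity and a clean reduction to a textbook result; what yours buys is a self-contained argument that makes explicit the bookkeeping the citation hides -- notably the two devices above, which Prop.~7.2 internalizes -- at the cost of about a page of elementary verification.
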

{\bf Proof}
Let us recall {\cite[Prop. 7.2]{Rockafellar_Wets_2009}}, which states essentially the same result for extended-valued functions. In that setting, there is no need to  focus on points and subsequences in $X$. When applying that result to our case, it turns out that we do not need to investigate $x\notin X$, as both inequalities
        $$
        \limsup_kf^k(x^k)+{\rm i}_X(x^k)\leq f(x)+{\rm i}_X(x)\leq\liminf_k f^k(x^k)+{\rm i}_X(x^k) 
        $$
would  hold for any sequence $x^k\to x$, as ${\rm i}_X(x^k)={\rm i}_X(x)=+\infty$ eventually. Let us now assume that $x\in X$. To prove item (i), note that $\LimOut{\epi{f^k+{\rm i}_X}}\subseteq\epi{f+{\rm i}_X}$ is equivalent to $\liminf_kf^k(x^k)+{\rm i}_X(x^k)\geq f(x)+{\rm i}_X(x)$ for all $x^k\to x$ (due to  {\cite[Prop. 7.2]{Rockafellar_Wets_2009}}), if there were a subsequence that happened to be out of $X$, the inequality would hold trivially. Thus, it is enough to consider only for those sequences $\{x^k\}_k\subseteq X$. 

\wlo{The proof of item (ii) is analogous, mutatis mutandis.}
    \mybox

Note that for the sequence $\{y^k\}_k$ above, we have $\lim_k f^k(y^k)=f(x)$.
Useful properties can be proven for a sequence of functions that is epi-convergent (see \cite[\S  7]{Rockafellar_Wets_2009}). For instance, all cluster points of the sequence of approximate minimizers of $f^k+{\rm i}_X$ minimize $f+{\rm i}_X$ under mild assumptions \cite[Thm. 7.31 (b)]{Rockafellar_Wets_1986}. 
Although the importance of this result in optimization, algorithmically designing an epi-convergent sequence of functions is not always a simple task. 
For instance, it is well known that even basic convergent algorithms, such as the cutting-plane method in convex optimization, fail to construct a sequence of functions that epi-converges to the objective function.
The main obstacle in the concept of epi-convergence is that it requires a sequence of functions for which we have both over- and under-estimator subsequences around every point. It was shown in \cite{HigleSen_1992} that, for algorithmic design purposes, it is enough to have only the right-most inclusion in Definition~\ref{def:epi}, i.e.,
$$
\epi{\ff}\subseteq \LimInn{\epi{\ff^k}}.
$$
 This condition is known as \emph{epigraphical nesting}, and it ensures that the sequence of functions estimates $\ff$ from below. This concept is clearly weaker than epi-convergence, but it still proves useful in optimization.
 \wlo{
\begin{prop}[{\cite[Prop. 6.14]{RoysetWets_2022}}]\label{prop:conv}
    For $\ff^k,\ff:\R^n\to\ReInf$ and a sequence $\{x^k\}_k$, suppose that 
    $$
    \epi{\ff}\subseteq\LimInn{\epi{\ff^k}}.
    $$
    If $\epsilon^k\to0$, for some $K\subseteq\mathbb{N}$ such that  $x^k\in\epsilon^k$-$\arg\min \ff^k\to_{K} \x$ and $\ff^k(x^k)\to_K\ff(\x)$, then one also has $\x\in\arg\min\ff$.
\end{prop}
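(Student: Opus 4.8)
The plan is to establish the defining inequality $\ff(\x)\le\ff(x)$ for every $x\in\R^n$, since this is precisely the assertion $\x\in\argmin\ff$. First I would fix an arbitrary $x$; the case $\ff(x)=+\infty$ is automatic, so the only situation requiring work is $\ff(x)<+\infty$, equivalently $(x,\ff(x))\in\epi{\ff}$.

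The first key step is to turn the epigraphical nesting hypothesis $\epi{\ff}\subseteq\LimInn{\epi{\ff^k}}$ into a recovery sequence for the pair $(x,\ff(x))$. By the definition of the inner limit, the inclusion $(x,\ff(x))\in\LimInn{\epi{\ff^k}}$ yields pairs $(y^k,\alpha^k)\in\epi{\ff^k}$ with $y^k\to x$ and $\alpha^k\to\ff(x)$, and in particular $\ff^k(y^k)\le\alpha^k$ for every $k$. I would emphasize that this recovery sequence exists along the entire index set, whereas the convergence of the iterates $x^k$ holds only along the subsequence $K$; bookkeeping of which convergence lives on which index set is the one place that demands a little care.

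The second key step chains approximate optimality with this recovery sequence. Because $x^k$ is an $\epsilon^k$-minimizer of $\ff^k$, one has $\ff^k(x^k)\le\ff^k(y)+\epsilon^k$ for all $y$; taking $y=y^k$ gives $\ff^k(x^k)\le\ff^k(y^k)+\epsilon^k\le\alpha^k+\epsilon^k$. I would then restrict this chain to $k\in K$ and pass to the limit: the left side tends to $\ff(\x)$ by the hypothesis $\ff^k(x^k)\to_K\ff(\x)$, while the right side tends to $\ff(x)$ since $\alpha^k\to\ff(x)$ and $\epsilon^k\to0$. Hence $\ff(\x)\le\ff(x)$, and as $x$ was arbitrary, $\x\in\argmin\ff$.

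I do not anticipate a genuine obstacle, as the argument is a direct unwinding of the inner-limit definition; the only subtlety worth flagging in the write-up is why the separate hypothesis $\ff^k(x^k)\to_K\ff(\x)$ is indispensable. Epigraphical nesting provides only the right-most inclusion $\epi{\ff}\subseteq\LimInn{\epi{\ff^k}}$, not the outer inclusion that would furnish the lower estimate $\ff(\x)\le\liminf_k\ff^k(x^k)$; thus nesting alone cannot control $\ff(\x)$ from above by the limiting function values. The assumed value convergence of the algorithm is exactly what compensates for this missing lower semicontinuity and allows the left endpoint of the inequality to pass to $\ff(\x)$.
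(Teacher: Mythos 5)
Your proof is correct. The paper does not supply its own proof of this proposition---it is quoted directly from \cite[Prop.~6.14]{RoysetWets_2022}---and your argument (extracting a recovery sequence $(y^k,\alpha^k)\in\epi{\ff^k}$ with $(y^k,\alpha^k)\to(x,\ff(x))$ from the inner-limit inclusion, chaining it with the $\epsilon^k$-optimality of $x^k$, and letting the assumed value convergence $\ff^k(x^k)\to_K\ff(\x)$ stand in for the lower semicontinuity estimate that epigraphical nesting alone cannot provide) is precisely the standard proof of that cited result, including the correct bookkeeping of which limits hold along the full index set versus along $K$.
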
}

\section{Model and Necessary Optimality Conditions}\label{sec:crit}
In nonconvex nonsmooth optimization,
many definitions of stationary points exist. A well-known definition is Bouligand stationarity \cite{Pang_2007}:  
a point $\x \in X^c:=\{x\in X:\; c(x)\leq 0\}$ is $B$-stationary to problem~\eqref{pb} if the directional derivative of $f$ with respect to $d$ is non-negative
for all $d \in \mathcal{T}_{X^c}(\x)$. 
 Obtaining an explicit expression of $\mathcal{T}_{X^c}(\x )$ is not simple, let alone an implementable one. We refer the interested reader to \cite{Pang_2017} and \cite{Ackooij_Oliveira_2022} for implementable expressions of $\mathcal{T}_{X^c}(\x )$ in particular settings of DC programming, see also Section \ref{sec:dcset} below.
For this reason, different necessary optimality conditions must come into play.

Given item {(ii)} of Lemma~\ref{lemma:ineq}, we could then use  $\x \in \arg\min_{x\in X}H(x;\x)$ as a necessary optimality condition for $\x \in X$ to solve problem~\eqref{pb}. However, for the class of problems we have in mind, working directly with the improvement function is still challenging. The reason is that  
it may happen in practice that (first-order) oracles for $f$ and $c$ are not available, but for functions composing them; see Sections \ref{sec:appl}-\ref{sec:comp} below. 
Our main assumption is that we can
construct, with the available oracle information, a model $\Model:\R^n\times\R^n\to\R$ for $H$ satisfying the following hypothesis.
\begin{assumption}[Model's requirements.]\label{assump-model}
\mbox{}
For all $x \in X$ fixed, let $A(x)$ be a finite index set and $\Model_a:\R^n\times\R^n\to \R$ be a real-valued function such that  $\Model_a(\cdot;x)$ is convex for all $a\in A(x)$.
We take $\Model(\cdot;x):=\min_{a \in A(x)}\; \Model_a(\cdot;x)$ as a model for $H(\cdot;x)$ in \eqref{eq:IF}
provided that the following conditions hold:
\begin{enumerate}[label=(\alph*)]
\item $\Model(x;x)=H(x;x)$ for all $x \in X$;
\item there exists a constant $\bar \mu \in [0,+\infty)$  such that $
H(y;x)\leq  \Model(y;x)+\frac{\bar \mu}{2}\norm{y-x}^{2}$ for all $y,x \in X$.
\end{enumerate}
\mytriangle
\end{assumption}
The hypotheses on $A(x)$ and $\Model_a$ are natural and helpful when $H$ can be written as the pointwise minimum of \wlo{a finite set of ``simple'' functions} or when oracles assessing component functions in $H$ can provide more than one subgradient. If none are the case, we can take $A(x)$ being a singleton so $\Model(\cdot;x)$ is a convex model. The case  $|A(x)|>1$ gives rise to a nonconvex (DC) model.
Assumption (a) is also a natural one and (b) requests the model, plus a quadratic term, to overestimate the improvement function. 
Although this last condition looks too restrictive, the following sections show otherwise: the modulus $\bar \mu>0$ need not be known, and we can build proper models for which this assumption is satisfied.
\wlo{Note that whether $\Model=H$, then Assumption \textit{(b)} reduces to \textit{weak convexity}, a well-known concept for which several models have been already proposed (see for instance \cite{atenas_2021,druvyatskiy_2021}). Moreover, for $\mu\geq \bar\mu$, the function $\Model(y;x)+\frac{\mu}{2}\|y-x\|^2$ defines a surrogation function in the sense of \cite[Def. 7.1.1]{Cui_Pang_2022}}.

\begin{definition}[Model criticality.]\label{def:crit}
Let $\Model:\R^n\times\R^n\to\R$ be a model satisfying Assumption~\ref{assump-model}. 
A point $\x \in X$ is said to be \emph{model critical} (M-critical for short) to problem~\eqref{pb} if  
\[ \x\in\displaystyle\arg\min_{x\in  X}\Model_a(x;\x)
\quad \forall\; a\in A_H(\x):=\col{\alpha\in A(\x): \, \Model_\alpha(\x;\x)=H(\x;\x)}.
\]
Furthermore, $\x $ is said to be \emph{feasible-model critical} (FM-critical)
if it is M-critical and  $c(\x) \leq 0$.
    \mytriangle
\end{definition}

\noindent This new concept of criticality is a necessary optimality condition for $\x \in X$ to solve problem~\eqref{pb}.

\begin{theorem}\label{thm:model}
Let $H$ be the improvement function given in~\eqref{eq:IF} and $\Model$ be a model satisfying Assumption~\ref{assump-model}. If $\x$ is a local solution to problem~\eqref{pb}, then $\x$ is FM-critical.
\end{theorem}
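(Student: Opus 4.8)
The plan is to establish the two defining properties of FM-criticality separately. Feasibility is immediate: since $\x$ is a local solution to~\eqref{pb} it belongs to the feasible set, so $c(\x)\leq 0$. It then remains to verify M-criticality, i.e. that $\x\in\arg\min_{x\in X}\Model_a(x;\x)$ for every $a\in A_H(\x)$. First I would invoke item (ii) of Lemma~\ref{lemma:ineq}: because $\x$ solves~\eqref{pb} locally, it is a local minimizer of $H(\cdot;\x)$ over $X$, so there is a neighborhood $U$ of $\x$ with $H(y;\x)\geq H(\x;\x)$ for all $y\in X\cap U$. Note also that $A_H(\x)\neq\emptyset$: Assumption~\ref{assump-model}(a) gives $H(\x;\x)=\Model(\x;\x)=\min_{a\in A(\x)}\Model_a(\x;\x)$, and this finite minimum is attained.

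Now fix $a\in A_H(\x)$, so that $\Model_a(\x;\x)=H(\x;\x)$ by the definition of $A_H(\x)$. I would argue by contradiction: suppose $\x$ does \emph{not} minimize the convex function $\Model_a(\cdot;\x)$ over the convex set $X$. Then there is $z\in X$ with $\delta:=\Model_a(\x;\x)-\Model_a(z;\x)>0$. The core of the argument is a first-order-versus-second-order comparison along the segment $y_t:=\x+t(z-\x)$, $t\in(0,1]$, which lies in $X$ by convexity and satisfies $y_t\to\x$. Convexity of $\Model_a(\cdot;\x)$ yields $\Model_a(y_t;\x)\leq\Model_a(\x;\x)-t\delta$. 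Combining this with $\Model(y_t;\x)\leq\Model_a(y_t;\x)$ (the pointwise-minimum structure) and Assumption~\ref{assump-model}(b) gives
\[
H(y_t;\x)\leq\Model_a(y_t;\x)+\tfrac{\bar\mu}{2}\,t^2\norm{z-\x}^2\leq H(\x;\x)-t\delta+\tfrac{\bar\mu}{2}\,t^2\norm{z-\x}^2 .
\]
For all sufficiently small $t>0$ the linear decrease $-t\delta$ dominates the quadratic term, so $H(y_t;\x)<H(\x;\x)$ with $y_t\in X\cap U$, contradicting the local minimality of $H(\cdot;\x)$ over $X$. Hence $\x$ minimizes $\Model_a(\cdot;\x)$ over $X$; since $a\in A_H(\x)$ was arbitrary, $\x$ is M-critical, and together with $c(\x)\leq 0$ it is FM-critical.

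The hard part will be the handling of the quadratic term $\tfrac{\bar\mu}{2}\norm{\,\cdot-\x}^2$ in Assumption~\ref{assump-model}(b). Evaluating the bound at a single point $y$ only produces $\Model_a(y;\x)\geq\Model_a(\x;\x)-\tfrac{\bar\mu}{2}\norm{y-\x}^2$, whose sign-wrong quadratic slack is too weak to conclude minimality. The resolution is to play this slack off against a genuinely first-order decrease $-t\delta$ extracted from convexity along the segment $y_t$, so that the first-order term wins for small steps. This is precisely where convexity of each $\Model_a(\cdot;\x)$ and of $X$ is indispensable, and I expect it to be the only nonroutine step—the feasibility claim and the reduction via Lemma~\ref{lemma:ineq}(ii) being essentially bookkeeping.
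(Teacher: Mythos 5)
Your proof is correct, and it reaches the conclusion by a genuinely different (more elementary) route than the paper's. Both arguments share the same two pillars—Lemma~\ref{lemma:ineq}(ii) to convert local optimality of $\x$ into local minimality of $H(\cdot;\x)$ over $X$, and Assumption~\ref{assump-model}(b) together with convexity of each $\Model_a(\cdot;\x)$—but the mechanics differ. The paper first sandwiches minima to show that $\x$ \emph{exactly} solves the proximal subproblem $\min_{x\in X\cap\mathcal{N}}\,\Model_a(x;\x)+\frac{\bar\mu}{2}\norm{x-\x}^2$ for every $a\in A_H(\x)$, then invokes the fact that a fixed point of the proximal operator of a convex function minimizes that function in order to strip the quadratic term, and finally uses convexity once more to globalize from the neighborhood $X\cap\mathcal{N}$ to all of $X$. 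You compress this three-step structure into a single line-segment contradiction: the linear decrease $-t\delta$ supplied by convexity along $y_t=\x+t(z-\x)$ dominates the $\mathcal{O}(t^2)$ slack from Assumption~\ref{assump-model}(b) for small $t>0$, which handles the quadratic term and the localization simultaneously (small $t$ keeps $y_t$ in the neighborhood $U$, and $y_t\in X$ by convexity of $X$; the case $\bar\mu=0$ is trivially included). In effect you have inlined a proof of the prox-fixed-point fact that the paper uses as a black box, so your argument is more self-contained, whereas the paper's phrasing makes the proximal interpretation explicit, which is thematically aligned with the algorithm's subproblems~\eqref{spbm}. Your additional bookkeeping—feasibility $c(\x)\leq 0$ directly from $\x$ being a local solution, and nonemptiness of $A_H(\x)$ because the finite minimum defining $\Model(\x;\x)$ is attained—is correct and slightly more explicit than the paper on both points.
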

\begin{proof}
If $\x$ is a local solution to problem~\eqref{pb}, then Lemma~\ref{lemma:ineq} (ii) asserts that \wlo{there exists a convex neighbourhood $\mathcal{N}$ of $\x$ such that} $\x\in\arg\min_{x\in X\wlo{\cap\mathcal{N}}} H(x;\x)$.
Let $\bar \mu> 0$ be as in Assumption~\ref{assump-model}(b). Then,
\begin{align*}
 H(\x ;\x )=\Model(\x ;\x ) &\geq \displaystyle \min_{x\in X\wlo{\cap\mathcal{N}}}\; \Model(x;\x )+\frac{\bar \mu}{2}\norm{x-\x }^{2}
   \geq \displaystyle \min_{x \in X\wlo{\cap\mathcal{N}}}\; H(x;\x) = H(\x ;\x ).
\end{align*}
Thus,
\begin{align*}
H(\x ;\x )&=\displaystyle \min_{x\in X\wlo{\cap\mathcal{N}}}\; \Model(x;\x )+\frac{\bar \mu}{2}\norm{x-\x }^{2}\\
& = \displaystyle \min_{x\in X\wlo{\cap\mathcal{N}}}\;\min_{a\in A(\x)}\; \Model_a(x;\x )+\frac{\bar \mu}{2}\norm{x-\x }^{2}\\
&\leq \displaystyle \min_{x\in X\wlo{\cap\mathcal{N}}}\;\min_{a\in A_H(\x)}\; \Model_a(x;\x )+\frac{\bar \mu}{2}\norm{x-\x }^{2}\\
&= \displaystyle \min_{a\in A_H(\x)}\;\min_{x\in X\wlo{\cap\mathcal{N}}}\; \Model_a(x;\x )+\frac{\bar \mu}{2}\norm{x-\x }^{2}\\
&\leq\wlo{\min_{x\in X\cap\wlo{\mathcal{N}}}\; \Model_a(x;\x )+\frac{\bar \mu}{2}\norm{x-\x }^{2},\quad \forall a \in A_H(\x)}\\
& \leq  \Model_a(\x;\x) =H(\x ;\x ),
\end{align*}
showing that $\x \in \arg\min_{x\in X\wlo{\cap\mathcal{N}}}\; \Model_a(x;\x )+\frac{\bar \mu}{2}\norm{x-\x }^{2}$ for all $a \in A_H(\x)$\wlo{, i.e., $\x$ is a fixed point of the proximal operator of $\frac{1}{\bar \mu}\Model_a(\cdot;\x)+{\rm i}_{X\cap\mathcal{N}}$; which is convex by assumption. Thus, $\x$ is a minimizer of $\Model_a(\cdot;\x)+{\rm i}_{X\cap\mathcal{N}}$.
Convexity of  $\Model_a(\cdot;\x)$ also implies that $\x$ not only minimizes $\Model_a(\cdot;\x)$ over $X\cap\mathcal{N}$, which is a neighbourhood of $\x$ in $X$, but over the whole set $X$. So,
$\x \in\arg\min_{x\in X}\; \Model_a(x;\x )$
for all $a \in A_H(\x)$, i.e., $\x$ is FM-critical.}
\mybox
\end{proof}

\section{Algorithm and Convergence Analysis}\label{sec:algo}

This section provides a proximal-like algorithm to compute  M-critical points under certain conditions. 
The approach's working horse is a model $\Model:\R^n\times \R^n\to\R$ satisfying Assumption~\ref{assump-model}. We leave open the specific choice for the convex models $\Model_a$ composing $\Model$, as we intend to make Algorithm~\ref{algo} below applicable to a broad class of optimization problems of the form~\eqref{pb}.
Under Assumption~\ref{assump-model},  we show that
subproblem~\eqref{spbm} below can be easily solved up to a certain accuracy $\epsilon^k\geq 0$ at every iteration $k$. The epigraphical nesting concept recalled in Section~\ref{sec:pre} then will be handy to show that Algorithm~\ref{algo} asymptotically computes an M-critical point. If the algorithm starts with a feasible point, then it is ensured to compute an FM-critical point.
\begin{algorithm}
\caption{Proximal-like Method with Improvement Function to Compute Critical Points}
\label{algo}
\begin{algorithmic}[1]
{\footnotesize
\State  Let $x^0 \in X$,  $\kappa \in (0,1)$, $\mu^0\geq \kappa$, and $\lambda\in [0,\kappa)$ be given
\For{$k = 0, 1, 2, \dots$}
\State Let  $y^k$ be an $\epsilon^k$-solution to   \label{lin:yk}
    \begin{equation}\label{spbm}
    \min_{y \in X}\;{\Model(y;x^k)  + \frac{\mu^k}{2}\norm{y-x^k}^2,}\quad\qquad \mbox{with}\qquad\quad 0\leq\epsilon^k\leq\frac{\lambda}{2}\norm{y^k-x^k}^2
    \end{equation}
\If{$\norm{ y^k - x^k}\leq \tol$}
\State Stop: and return $x^k$ 
\EndIf
\If{$H(y^k;x^k)\leq H(x^k;x^k)-  \frac{\kappa-\lambda}{2}{\norm{y^k-x^k}^2}$}
\State Serious step: set $x^{k+1} = y^k$ and $\mu^{k+1}=\mu^k$ \label{SS} 
\Else
\State Null step: set $ x^{k+1}=x^k$ and $\mu^{k+1}>\mu^k$ \label{NS}
\EndIf
\EndFor
}
  \end{algorithmic}
\end{algorithm}

In Algorithm~\ref{algo}, the model 
is updated whenever a new serious step is performed: in this case, the so-called stability center $x^k$ is changed in $\Model(\cdot;x^k)$. For this task, the algorithm will need oracles providing (partial) information on functions $f$ and $c$ (see equation \eqref{mod:sum_max} below for an example of how $x^k$ may impact $\Model$).

 Only $\mu^k$ changes after a null step. Assumption~\ref{assump-model}(b) is crucial to ensure that once $\mu^k$ reaches a certain threshold $\bar \mu>0$, only serious steps are performed. This is why we do not allow $\mu^k$ to decrease.

Line~\ref{lin:yk} requires solving inexactly a nonsmooth optimization problem, which we assume has a solution (e.g. if $X$ is compact). The precision $\epsilon^k$ depends on the iterate $y^k$ and, thus, the optimization solver employed to solve~\eqref{spbm} does not receive $\epsilon^k$ as an input. However, this will not be an issue in practice, as we explain in the following remark.

\begin{remark} \label{remark:subproblems}
\wlo{As in \cite[Algo. 1]{Pang_2017},} at every iteration $k$ subproblem~\eqref{spbm} can be equivalently decomposed in a finite family of convex optimization problems that can be solved by bundle methods \cite{Correa_Lemarechal_1993}. As a result, we can globally solve the nonconvex subproblem~\eqref{spbm} by solving the following $|A(x^k)|$ convex subproblems
\begin{equation}\label{eq:subproblems}
y_{a}^k:=\displaystyle \arg\min_{x\in X} \Model_{a}(x;x^k) +\frac{\mu^k}{2}\norm{x-x^k}^2,
\end{equation}
and letting $y^k:=y^k_{a^*}$, with $a^*$ the index yielding the 
minimum functional value above.  In Appendix~\ref{ap:bundle} we rely on \cite{Correa_Lemarechal_1993} to detail a bundle-like algorithm to solve the above subproblems up to tolerance $\epsilon^k\geq 0$ satisfying the condition on the right of equation~\eqref{spbm}.
\end{remark}

\begin{theorem}\label{theo:conv}
Consider problem~\eqref{pb}, a model $\Model$ satisfying Assumption~\ref{assump-model}, and suppose that subproblem~\eqref{spbm} is solvable for all $k$.
 Let $\{x^k\}_k$
be the sequence produced by Algorithm~\ref{algo} with $\tol=0$. Furthermore, suppose  that
\begin{enumerate}[itemsep=2pt, topsep=2pt, label=(\roman*), leftmargin=1.0cm, start=1]
\item $\{x^k\}_k$ is bounded;
\item after a null step, the rule to update the prox-parameter ensures that $\mu^{k+1}>\mu^k +\delta$ for some constant $\delta>0$;
\item for any cluster  point $\x$ of $\{x^k\}_k$, there exists a subsequence  $\{x^{k_i}\}_i$  such that $\lim_{i} x^{k_i} =\x$  and
 $\epi{\bar\Model(\cdot;\x)+{\rm i}_X}\subseteq\LimInn{\epi{\Model(\cdot;x^{k_i})+{\rm i}_X}}$, with $\bar\Model(\cdot;\x):=\min_{a \in A_H(\x)}\Model_a(\cdot;\x)$.
\end{enumerate} 
Then $\x$ is M-critical, which upgrades to FM-critical if $\x$ is feasible (that is certainly true if the algorithm starts with a feasible point).
\end{theorem}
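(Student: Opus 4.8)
The plan is to exploit the two step types of Algorithm~\ref{algo} and then pass to the limit with Proposition~\ref{prop:conv}. Two preliminary facts are useful: $H(x;x)=[c(x)]_+$, and $H$ is jointly continuous since $f,c$ are. I would first show that Assumption~\ref{assump-model}(b) permits only finitely many null steps. Combining the $\epsilon^k$-optimality in \eqref{spbm} with $\epsilon^k\le\frac{\lambda}{2}\norm{y^k-x^k}^2$ and Assumption~\ref{assump-model}(a) gives $\Model(y^k;x^k)\le H(x^k;x^k)-\frac{\mu^k-\lambda}{2}\norm{y^k-x^k}^2$, and feeding this into Assumption~\ref{assump-model}(b) yields $H(y^k;x^k)\le H(x^k;x^k)-\frac{\mu^k-\lambda-\bar\mu}{2}\norm{y^k-x^k}^2$. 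Hence whenever $\mu^k\ge\kappa+\bar\mu$ the serious-step acceptance test is met (or $y^k=x^k$ and the algorithm stops, since $\tol=0$). As hypothesis (ii) increases $\mu^k$ by at least $\delta$ at each null step, the threshold $\kappa+\bar\mu$ is crossed after finitely many null steps, after which every step is serious and $\mu^k$ equals a constant $\mu_\infty<\infty$.

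Next I would establish $\norm{y^k-x^k}\to0$. Applying Lemma~\ref{lemma:ineq}(i) with $\mu=\kappa-\lambda>0$ to each serious step, I distinguish regimes: once $c(x^k)\le0$, feasibility is preserved ($c(y^k)\le0$) and $f(x^{k+1})\le f(x^k)-\frac{\kappa-\lambda}{2}\norm{y^k-x^k}^2$, whereas if the iterates remain infeasible then $c(x^{k+1})\le c(x^k)-\frac{\kappa-\lambda}{2}\norm{y^k-x^k}^2$. In either regime boundedness of $\{x^k\}$ (hypothesis (i)) plus continuity of $f,c$ make the relevant sequence monotone and bounded below, so telescoping gives $\sum_k\norm{y^k-x^k}^2<\infty$ and thus $\norm{y^k-x^k}\to0$.

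For the limit, let $\x$ be a cluster point and $\{k_i\}$ the subsequence from hypothesis (iii) with $x^{k_i}\to\x$, chosen beyond the last null step so that $\mu^{k_i}=\mu_\infty$; since $y^{k_i}=x^{k_i+1}$ and $\norm{y^{k_i}-x^{k_i}}\to0$, also $y^{k_i}\to\x$. I set $\ff^{k_i}:=\Model(\cdot;x^{k_i})+\frac{\mu_\infty}{2}\norm{\cdot-x^{k_i}}^2+{\rm i}_X$ and $\ff:=\bar\Model(\cdot;\x)+\frac{\mu_\infty}{2}\norm{\cdot-\x}^2+{\rm i}_X$. Adding the continuous, convergent prox term preserves the inner-limit inclusion (verified through the sequential characterization of Lemma~\ref{lemma:epi2}(ii): the same recovery sequence works because $\frac{\mu_\infty}{2}\norm{y^k-x^{k_i}}^2$ converges along it), so hypothesis (iii) upgrades to $\epi{\ff}\subseteq\LimInn{\epi{\ff^{k_i}}}$. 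Each $y^{k_i}$ is an $\epsilon^{k_i}$-minimizer of $\ff^{k_i}$ with $\epsilon^{k_i}\to0$; the $\epsilon$-optimality gives $\limsup_i\Model(y^{k_i};x^{k_i})\le H(\x;\x)$, while Assumption~\ref{assump-model}(b) and joint continuity of $H$ give the reverse $\liminf$, so $\ff^{k_i}(y^{k_i})\to H(\x;\x)=\ff(\x)$. Proposition~\ref{prop:conv} then yields $\x\in\arg\min_{x\in X}\{\bar\Model(x;\x)+\frac{\mu_\infty}{2}\norm{x-\x}^2\}$.

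Finally I would convert this prox-fixed point into M-criticality. Since $\bar\Model=\min_{a\in A_H(\x)}\Model_a(\cdot;\x)$ is nonconvex, the prox term cannot be dropped directly; instead, for each $a\in A_H(\x)$ one has $\Model_a(x;\x)\ge\bar\Model(x;\x)$ for all $x$ together with $\Model_a(\x;\x)=H(\x;\x)=\bar\Model(\x;\x)$, and combining these with the previous conclusion shows $\x$ minimizes the \emph{convex} map $\Model_a(\cdot;\x)+\frac{\mu_\infty}{2}\norm{\cdot-\x}^2$ over $X$. As in Theorem~\ref{thm:model}, the prox term has vanishing gradient at $\x$, so the optimality condition reduces to $0\in\partial\Model_a(\x;\x)+N_X(\x)$, i.e. $\x\in\arg\min_{x\in X}\Model_a(x;\x)$ for every $a\in A_H(\x)$, which is precisely M-criticality; feasibility of $x^0$ keeps every $x^k$ feasible by Lemma~\ref{lemma:ineq}(i.1), whence $c(\x)\le0$ and $\x$ is FM-critical. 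The main obstacle I anticipate is coupling Steps three and four: one must carefully carry the quadratic prox term through the epigraphical-nesting limit and then recover per-index criticality despite the nonconvex pointwise-minimum structure, which works only because each individual model $\Model_a(\cdot;\x)$ is convex and tight at $\x$.
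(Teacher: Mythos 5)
Your proposal is correct and follows essentially the same route as the paper's own proof: the same threshold argument $\mu^k\geq\kappa+\bar\mu$ (via Assumption~\ref{assump-model}(b) and the $\epsilon^k$-optimality bound) showing there are finitely many null steps, the same two-regime telescoping through Lemma~\ref{lemma:ineq}(i) to obtain $\norm{y^k-x^k}\to0$, the same lifting of hypothesis (iii) through the quadratic prox term followed by the sandwich $\ff^{k_i}(y^{k_i})\to H(\x;\x)$ and Proposition~\ref{prop:conv}, and the same convexity argument to drop the prox term and recover per-index minimality for every $a\in A_H(\x)$. The only point left implicit is the trivial finite-termination branch ($y^k=x^k$ with $\tol=0$), which the paper handles explicitly at the outset but which your exact-solution argument covers immediately with $\epsilon^k=0$.
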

\begin{proof}
First,  observe that if the algorithm stops at iteration $k$ with $\tol=0$, then $y^k=x^k$ and, thus, condition~\eqref{spbm} 
implies that $\x=x^k$ is an exact solution to subproblem~\eqref{spbm}.
Thus,
\begin{align*}
H(\x;\x)&=\Model(\x;\x)=\min_{x\in X}\; \Model(x;\x)+\frac{\mu^k}{2}\norm{x-\x}^2\\
&\leq \min_{x\in X}\; \min_{a\in A_H(\x)}\; \Model_a(x;\x)+\frac{\mu^k}{2}\norm{x-\x}^2\\
&\leq \min_{x\in X}\;  \Model_a(x;\x)+\frac{\mu^k}{2}\norm{x-\x}^2\quad \forall\; a\in A_H(\x)\\
&\leq H(\x;\x),
\end{align*}
showing that $\x \in \arg\min_{x\in X}\;  \Model_a(x;\x )+\frac{\mu^k}{2}\norm{x-\x}^2$ for all $a\in A_H(\x)$. Convexity of $\Model_a(\cdot;x)$ yields $\x \in \arg\min_{x\in X}\;  \Model_a(x;\x )$ for all $a\in A_H(\x)$, i.e., $\x$ is M-critical.

From now on, let us assume that the algorithm loops forever:

\textbf{Claim 1: Algorithm 1 produces finitely many null steps.}
Note that if $\mu^k\geq \bar \mu +\kappa$ (with $\bar \mu$ ensured by Assumption~\ref{assump-model}), then $y^k$ is declared a serious iterate. 
Indeed,
\begin{align*}
H(y^k;x^k)&\leq 
\Model(y^k;x^k)+\frac{\bar \mu}{2}\norm{y^k-x^k}^2\\
&
= \Model(y^k;x^k)+\frac{\bar \mu+\kappa}{2}\norm{y^k-x^k}^2 - {\displaystyle\frac{\kappa}{2}}\norm{y^k-x^k}^2 \\
& \leq \min_{x\in X}\col{ \Model(x;x^k)+\frac{\mu^k}{2}\norm{x-x^k}^2}+\epsilon^k - {\displaystyle\frac{\kappa}{2}}\norm{y^k-x^k}^2 \\
&\leq  \Model(x^k;x^k) +\epsilon^k - \frac{\kappa}{2}\norm{y^k-x^k}^2\\
&\leq  H(x^k;x^k) -\frac{\kappa-\lambda}{2}\norm{y^k-x^k}^2 ,
\end{align*}
where the first inequality is due to Assumption~\ref{assump-model}(b), and the two last ones follow from definition of $y^k$ and identity $\Model(x^k;x^k)=H(x^k;x^k)$. Thus $y^k$ satisfies the descent test and becomes a serious iterate.
As $\mu^k$ increases by at least $\delta>0$ after a null step (hypothesis {(ii)}), we conclude that Algorithm~\ref{algo}
generates \wlo{at most $\lceil\frac{\bar\mu+\kappa-\mu^0}{\delta}\rceil$ null steps}. Hence, there exists $\bar k $ such that, for all $k\geq \bar k$,
\[
\mu^k=\mu':= \mu^{\bar k},\quad x^{k+1}\in\epsilon^k\mbox{-}\arg\min_{x\in X} \Model(x;x^k)+\frac{\mu'}{2}\norm{x-x^k}^2,\quad\mbox{and}
\] 
\[
H(x^{k+1};x^k)\leq H(x^k;x^k) -\frac{\kappa-\lambda}{2}\norm{x^{k+1}-x^k}^2.
\]

\textbf{Claim 2: The sequence $\{\norm{y^{k}-x^k}\}_k$ vanishes.}
First, note that $x^{k+1}=y^k$ for all $k\geq \bar k$. We split our analysis into two cases: there exists an iteration counter $k_1\geq \bar k$ such that $x^{k_1}$ is feasible for problem~\eqref{pb} or such a $k_1$ does not exist. 
If we assume the existence of such $k_1$, by an inductive argument on Lemma \ref{lemma:ineq} (item {(i.1)} with $\mu=\kappa-\lambda>0$) we can deduce that for every $k\geq k_1$ the iterate $x^k\in X^c$ and the following inequality holds
    $$
    f(x^{k+1})\leq f(x^k)-\frac{\kappa-\lambda}{2}\norm{x^{k+1}-x^k}^2.
    $$
    Rearranging the terms and adding the last expression for every $k\geq k_1$, we obtain
    $$
    \sum_{k=k_1}^\infty \norm{x^{k+1}-x^k}^2\leq \displaystyle\frac{2}{\kappa-\lambda}\sum_{k=k_1}^\infty \left(\wlo{f(x^k)- f(x^{k+1})}\right)\leq \frac{2}{\kappa-\lambda}\left(f(x^{k_1})-\lim_{k\to\infty}f(x^k))\right).
    $$
    It follows by the continuity of $f$,  and boundedness of $\{x^k\}_k$  that the right-hand side of the above inequality is finite, and so is the sum at the left-hand side. This implies that  $\{\norm{x^{k+1}-x^k}\}_k$ converges to zero.
    If we now assume that $x^k$ is not feasible for every $k\geq \bar k$ then, inequality 
    $$
    c(x^{k+1})\leq c(x^{k}) -\frac{\kappa-\lambda}{2}\norm{x^{k+1}-x^k}^2,
    $$
    holds for every $k\geq \bar k$ by applying induction on Lemma \ref{lemma:ineq} (item {(i.2)}). Then, by a similar argument as the one used in the previous case, we deduce that the sequence $\{\norm{x^{k+1}-x^k}\}_{k}$ vanishes. 
Hence, $\epsilon^k \to 0$ (due to \eqref{spbm}) and $\{y^{k_i}\}_i$ also converges to $\x$:
\[
\lim_iy^{k_i}= \lim_{i} x^{k_i+1} =  \lim_{i} x^{k_i} = \x.
\]

\textbf{Claim 3: Epigraphical nesting of the sequence.}
Remark that, using hypothesis (iii), we can consider a sequence $z^i\to x$ for any given $x\in \R^n$ holding item (ii) of Lemma \ref{lemma:epi2} for $\Model(\cdot;x^{k_i})+{\rm i}_X$. Thus,
$$
 \limsup_i \Model(z^i;x^{k_i})+{\rm i}_X(z^i)+\frac{\mu^{k_i}}{2}\norm{z^i-x^{k_i}}^2\leq\bar\Model(x;\x )+{\rm i}_X(x)+\frac{\mu'}{2}\norm{x-\x}^2
$$
which, again by item (ii) in Lemma \ref{lemma:epi2}, leads to
\[
\epi{\bar\Model(\cdot;\x )+\frac{\mu'}{2}\norm{\cdot-\x }^2+{\rm i}_X}\subseteq\LimInn{\epi{\Model(\cdot;x^{k_i})+\frac{\mu^{k_i}}{2}\norm{\cdot-x^{k_i}}^2+{\rm i}_X}}.
\] 

Observe that, for all $k$,
\begin{align*}
H(y^k;x^k)&\leq 
\Model(y^k;x^k)+\frac{\bar \mu}{2}\norm{y^k-x^k}^2\\
& = \Model(y^k;x^k)+\frac{\mu^k}{2}\norm{y^k-x^k}^2 + \frac{\bar \mu-\mu^k}{2}\norm{y^k-x^k}^2\\
& \leq  \min_{y \in X}\Model(y;x^k)+\frac{\mu^k}{2}\norm{y-x^k}^2+ \epsilon^k + \frac{\bar \mu-\mu^k}{2}\norm{y^k-x^k}^2\\
& \leq  H(x^k;x^k) + \frac{\bar \mu-\mu^k+\lambda}{2}\norm{y^k-x^k}^2.
\end{align*}
Thus, for the given subsequences and taking the limit with $i$ going to infinity, the continuity of $H$ leads to
\begin{align*}
\lim_{i} \Model(y^{k_i};x^{k_i})+\frac{\mu^k}{2}\norm{y^{k_i}-x^{k_i}}^2  +\ind_X(y^{k_i})
&=H(\x;\x) \\
&= \Model_a(\x;\x)+\frac{\mu'}{2}\norm{\x-\x }^2\quad \forall\; a \in A_H(\x)\\
&= \bar \Model(\x;\x)+\frac{\mu'}{2}\norm{\x-\x }^2.
\end{align*}

\wlo{Recall that, by definition,
$$
y^k\in\epsilon^k\mbox{-}\arg\min_{y \in X}\;{\Model(y;x^k)  + \frac{\mu^k}{2}\norm{y-x^k}^2}
$$
which together with $y^k\to\x$ (due to Claim 2) and the epigraphical nesting of the sequence (Claim 3), we can conclude that
 $
 \x \in \arg\min_{x\in X}\bar \Model(x;\x )+\frac{\mu'}{2}\norm{x-\x}^2
 $ due to Propostion \ref{prop:conv}, which in turn gives}
\begin{align*}
H(\x;\x)&=\min_{x\in X}\bar \Model(x;\x )+\frac{\mu'}{2}\norm{x-\x}^2\\
&= \min_{x\in X}\; \min_{a\in A_H(\x)}\; \Model_a(x;\x )+\frac{\mu'}{2}\norm{x-\x}^2\\
&\leq  \min_{x\in X}\;  \Model_a(x;\x )+\frac{\mu'}{2}\norm{x-\x}^2\quad \forall\; a\in A_H(\x)\\
&\leq H(\x;\x),
\end{align*} 
showing that $\x \in \arg\min_{x\in X}\;  \Model_a(x;\x )+\frac{\mu'}{2}\norm{x-\x}^2$ for all $a\in A_H(\x)$. Convexity of $\Model_a(\cdot;x)$ yields $\x \in \arg\min_{x\in X}\;  \Model_a(x;\x )$ for all $a\in A_H(\x)$, i.e., $\x$ is M-critical.
In addition, if  $x^0 \in X^c=\col{x\in X:\, c(x)\leq 0}$, then the algorithm ensures that $\{x^k\}_k\subset X^c$ due to Lemma~\ref{lemma:ineq} {(i.1)}. FM-criticality of $\x$ follows from the fact that $X^c$ is closed (as $X$ is closed and $c$ is continuous). \mybox
\end{proof}

We conclude this section with a few comments on the hypothesis employed by Theorem~\ref{theo:conv}.
Note that hypothesis {(i)} is simple to satisfy (e.g. if $X$ is compact). Hypothesis {(ii)} is a trivial one. Its goal is to ensure that if the algorithm produces infinitely many null steps, then $\mu^k\to \infty$.
The more stringent conditions are the ones related to the model, i.e., Assumption~\ref{assump-model} and hypothesis {(iii)}. We recall that the threshold $\bar \mu>0$ in Assumption~\ref{assump-model} is not required to be known. In the next sections, we show how to build proper models for which Assumption~\ref{assump-model} and hypothesis {(iii)} are satisfied.

\section{Optimization Problems from Stochastic Programming}\label{sec:appl}

The following structure is motivated by reliability-based optimization problems, where the design cost of an engineering system is minimized while satisfying reliability constraints \cite{Byun_Oliveira_Royset_2023}. The structure covers chance constraints and buffered optimization models constructed via sample average approximation. Details are postponed to the section on numerical experiments. Let us focus on the resulting deterministic optimization problem, which is a particular case of problem \eqref{pb}
\begin{equation}\label{pbm:sum_max} 
\left\{
    \begin{array}{lll}
         \displaystyle \min_{x \in X}& f(x):= \displaystyle \sum_{j=1}^{N_f}\max_{\ell \in F_j}\{\f^1_{j\ell}(x)+\f^2_{j\ell}(x)\} \\
         \mbox{s.t.} & c(x):= \displaystyle \sum_{j=1}^{N_c}\max_{\ell \in C_j}\{\c^1_{j\ell}(x)+\c^2_{j\ell}(x)\}\leq 0, 
    \end{array}
    \right.
\end{equation}
under the following assumptions:
\begin{enumerate}[itemsep=2pt, topsep=2pt, label=(\alph*), leftmargin=.8cm, start=1]
\item $\emptyset \neq X$ is a convex and closed subset of $\Re^n$;
$\f^1_{j\ell},\c^1_{j\ell}: \Re^n \to \R$ are convex functions, $\f^2_{j\ell},\c^2_{j\ell}: \Re^n \to \R$ are weakly-concave functions, and $F_j$, $C_j$ are finite index sets.
\end{enumerate}
Recall that a function $\phi:\Re^n\to \Re$ is weakly-concave if there exists a modulus $\mu_\phi\geq 0$ such that $\phi(x)-\frac{\mu}{2}\norm{x}^2$ is concave for all $\mu\geq \mu_\phi$. 
This implies that,
for all $\mu\geq \mu_\phi$ and all $\xi_\phi^x \in \pclarke \phi(x)$, the following inequality holds
\begin{equation}\label{wc_ineq}
\phi(y)\leq \phi(x)+ \inner{\xi_\phi^x }{y-x}+\frac{\mu}{2}\norm{y-x}^2 \quad \forall\, y\in \Re^n.
\end{equation}

In what follows we work with the improvement function $H$ of this problem and a model $\Model$ for $H$ satisfying Assumption~\ref{assump-model} and hypothesis {(iii)} in Theorem~\ref{theo:conv}. We will work with Clarke subdifferentials, which are well defined since the finite-valued functions $f$ and $c$ in~\eqref{pbm:sum_max} are locally Lipschitz. \wlo{We do not rely on first (or higher) order} oracles for $f$ and $c$. Instead, we assume that there are individual oracles for the component functions in~\eqref{pbm:sum_max}, providing us with function values and at least one subgradient for every component function: given $x\in X$, the oracles provide us with $\f^2_{j\ell}(x)$,
$\zeta_{{j\ell}} \in \pclarke \f^2_{j\ell}(x)$, $\c^2_{j\ell}(x)$, and $\xi_{{j\ell}} \in \pclarke \c^2_{j\ell}(x)$.
With this partial information,  let us define the following convex model:
\begin{subequations}\label{mod:sum_max}
\begin{equation}\label{mod:sum_max-a}
\Model_{a}(y;x):=\max\col{
\begin{array}{ll}
\displaystyle\sum_{j=1}^{N_f}\max_{\ell \in F_j}\col{\f^1_{j\ell}(y)+  \f^2_{j\ell}(x)+\inner{ \zeta_{{j\ell}}}{y-x}}-\tau_f(x),\\
\displaystyle\sum_{j=1}^{N_c}\max_{\ell \in C_j}\col{\c^1_{j\ell}(y)+ \c^2_{j\ell}(x)+\inner{ \xi_{{j\ell}}}{y-x}}
\end{array}
}
\end{equation}
where $a$ denotes the tuple $(\zeta,\xi)$ with $\zeta=(\zeta_{j\ell})_{j\ell}$ and $\xi=(\xi_{j\ell})_{j\ell}$ being the subgradients given by the oracles.
If the oracles can provide us with  more than one tuple, say a set
\begin{equation}\label{mod:sum_max-b}
A(x)\subseteq \col{(\zeta,\xi):\zeta_{{j\ell}}\in \pclarke \f^2_{j\ell}(x)\;,\;\xi_{{j\ell}} \in \pclarke \c^2_{j\ell}(x)\quad \forall\,j,\ell}
\end{equation}
of them, we may consider the model
\begin{equation}\label{mod:sum_max-c}
    \Model(y;x)=\min_{a \in A(x)}\;\Model_a(y;x).
\end{equation}
\end{subequations}
Observe that $\Model_a(\cdot;x)$ is convex and $\Model_a(x;x)=H(x;x)$ for all $x$ and $a\in A(x)$, yielding thus Assumption~\ref{assump-model}(a).
In the remainder of this section we assume that
\begin{equation}\label{A(x)=n}
|A(x)| \leq {\tt n}< \infty, \quad \mbox{with $ {\tt n}\geq 1$ an integer number.}
\end{equation}
This means that we get at most ${\tt n}$ subgradient tuples from the oracles.
In what follows we proceed to show that $\Model$ above satisfies also Assumption~\ref{assump-model}(b) and hypothesis {(iii)} in Theorem~\ref{theo:conv}. 

 \subsection{Checking the Required Assumptions on the Model}
 The following result shows that  model $\Model$ in~\eqref{mod:sum_max} satisfies Assumption~\ref{assump-model}(b).
\begin{lemma}\label{lem:model_ineq}
Let each function $\f^2_{j\ell},\c^2_{j\ell}:\Re^n \to \R$ in \eqref{pbm:sum_max} be weakly concave.
Then there exists $\bar \mu\geq 0$ such, for all $\mu\geq \bar \mu$, 
\[
H(y;x)\leq \Model(y;x)+\frac{\mu}{2}\norm{y-x}^2\quad \forall\; y,x\in \Re^n.
\]
\end{lemma}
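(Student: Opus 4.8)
The plan is to exploit the pointwise-minimum structure of $\Model$ to reduce the claim to a bound on each convex piece $\Model_a$, and then to push the weak-concavity inequality \eqref{wc_ineq} through the outer sum-of-max structure of $f$ and $c$. First I would observe that, since $\Model(\cdot;x)=\min_{a\in A(x)}\Model_a(\cdot;x)$, the target estimate $H(y;x)\leq\Model(y;x)+\frac{\mu}{2}\norm{y-x}^2$ is equivalent to the family of estimates $H(y;x)\leq\Model_a(y;x)+\frac{\mu}{2}\norm{y-x}^2$ holding simultaneously for every $a\in A(x)$: subtracting the quadratic term, ``$H(y;x)-\frac{\mu}{2}\norm{y-x}^2\leq\Model_a(y;x)$ for all $a$'' is exactly ``$H(y;x)-\frac{\mu}{2}\norm{y-x}^2\leq\min_{a}\Model_a(y;x)$''. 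Hence the minimization over $a$ is cost-free, and I may fix a single tuple $a=(\zeta,\xi)\in A(x)$ throughout, recalling that by \eqref{mod:sum_max-b} each $\zeta_{j\ell}\in\pclarke\f^2_{j\ell}(x)$ and $\xi_{j\ell}\in\pclarke\c^2_{j\ell}(x)$.

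For a fixed $a$, I would bound the two arguments of the outer maximum defining $H$ separately against the two arguments of the outer maximum defining $\Model_a$. Set $N:=\max\{N_f,N_c\}$ and let $\mu_0$ be the largest weak-concavity modulus among all components $\f^2_{j\ell},\c^2_{j\ell}$; my candidate is $\bar\mu:=N\mu_0$. Fixing $\mu\geq\bar\mu$ so that $\mu/N\geq\mu_0$ dominates every component modulus, I apply \eqref{wc_ineq} to each $\f^2_{j\ell}$ at $x$ with subgradient $\zeta_{j\ell}$ and modulus $\mu/N$, add the (untouched) term $\f^1_{j\ell}(y)$, take the maximum over $\ell\in F_j$ — the quadratic term is independent of $\ell$ and thus factors out of the max — and sum over $j=1,\dots,N_f$ to obtain
$$
f(y)\leq\sum_{j=1}^{N_f}\max_{\ell\in F_j}\col{\f^1_{j\ell}(y)+\f^2_{j\ell}(x)+\inner{\zeta_{j\ell}}{y-x}}+\frac{N_f\,\mu}{2N}\norm{y-x}^2.
$$
Since $N_f\leq N$, the quadratic coefficient is at most $\frac{\mu}{2}$; subtracting $\tau_f(x)$ identifies the sum on the right as the first argument of $\Model_a(y;x)$, so $f(y)-\tau_f(x)\leq\Model_a(y;x)+\frac{\mu}{2}\norm{y-x}^2$. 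The verbatim argument applied to the $\c^2_{j\ell}$ (using $N_c\leq N$) gives $c(y)\leq\Model_a(y;x)+\frac{\mu}{2}\norm{y-x}^2$. Taking the maximum of these two bounds yields $H(y;x)\leq\Model_a(y;x)+\frac{\mu}{2}\norm{y-x}^2$, and the first-paragraph reduction then delivers the claim for $\Model$.

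The only delicate point — and the one I would flag as the crux — is the bookkeeping of the quadratic modulus through the outer sum: each of the $N_f$ (resp. $N_c$) summands contributes its own copy of the quadratic term, so a naive application of \eqref{wc_ineq} with modulus $\mu$ would inflate the final constant by a factor $N_f$ (resp. $N_c$). The remedy is precisely to apply \eqref{wc_ineq} with the rescaled modulus $\mu/N$, so that $N_f/N\leq 1$ absorbs the accumulation and the coefficient lands exactly on $\frac{\mu}{2}$; this is what forces the definition $\bar\mu=N\mu_0$. Everything else is routine: the maximum commutes with the additive quadratic term, and the convexity of $\f^1_{j\ell},\c^1_{j\ell}$ plays no role in this overestimation beyond being carried along unchanged.
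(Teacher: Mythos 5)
Your proof is correct and follows essentially the same route as the paper's: weak concavity \eqref{wc_ineq} is applied componentwise to each $\f^2_{j\ell}$ and $\c^2_{j\ell}$, the quadratic terms accumulate across the outer sum (one copy per summand, the max over $\ell$ absorbing nothing extra), and the minimum over $a\in A(x)$ is cost-free because the bound is uniform in the subgradient tuple. The only difference is modulus bookkeeping: you apply \eqref{wc_ineq} with the rescaled modulus $\mu/N$ so the final coefficient lands exactly on $\frac{\mu}{2}$, whereas the paper applies it with modulus $\bar\mu_f$ (resp.\ $\bar\mu_c$) and sets $\bar\mu=\max\col{N_f\bar\mu_f,\,N_c\bar\mu_c}$, a marginally sharper threshold than your $\max\{N_f,N_c\}\,\mu_0$ --- immaterial for the existence claim.
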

\begin{proof} 
Let $\mu_{j\ell}\geq 0$ be the modulus of the weakly-concave function $\f^2_{j\ell}$. It follows from~\eqref{wc_ineq} that $\f^2_{j\ell}(y)\leq\f^2_{j\ell}(x)+\inner{ \zeta_{{j\ell}}}{y-x}+\frac{\mu}{2}\norm{y-x}^2$ for all $\mu\geq \mu_{j\ell}$, independently of the choice of $\zeta$.
As every index set $F_i$ in~\eqref{pbm:sum_max} are finite and $j=1,\ldots,N_f$, the value $\bar \mu_f:=\max_{j=1,\ldots,N_f}\max_{\ell \in F_j} \mu_{j\ell}$ is finite.
Hence, for all $y  \in \R^n$ and all $\mu \geq \bar \mu_f$,
\begin{align*}
f(y)&\leq \sum_{j=1}^{N_f}\max_{\ell \in F_j}\col{ \f^1_{j\ell}(y) + \f^2_{j\ell}(x)+\inner{ \zeta_{{j\ell}}}{y-x}+\frac{\bar \mu_f}{2}\norm{y-x}^2}\\
&= \sum_{j=1}^{N_f}\max_{\ell \in F_j}\col{ \f^1_{j\ell}(y) + \f^2_{j\ell}(x)+\inner{ \zeta_{{j\ell}}}{y-x}}+\frac{N_f\bar \mu_f}{2}\norm{y-x}^2.
\end{align*}
An analogous argument concerning functions $\c^2_{j\ell}$ ensures the existence of a threshold $\bar \mu_c<\infty$ such that 
\[
c(y)\leq \sum_{j=1}^{N_c}\max_{\ell \in C_j}\col{ \c^1_{j\ell}(y) + \c^2_{j\ell}(x)+\inner{ \xi_{{j\ell}}}{y-x}}+\frac{N_c\bar \mu_c}{2}\norm{y-x}^2.
\]
Then, by taking $\bar \mu=\max\col{N_f\bar \mu_f,N_c\bar \mu_c}$ and recalling the definitions of $H$ in~\eqref{eq:IF} and $\Model_a$ in~\eqref{mod:sum_max-a}, we conclude that
$H(y;x)\leq \Model_a(y;x)+\frac{\bar \mu}{2}\norm{y-x}^2$. The result follows by taking the minimum over $a \in A(x)$ given in~\eqref{mod:sum_max-b} and recalling \eqref{mod:sum_max-c}.
  \mybox
\end{proof}

Next, we show that $\Model$ in~\eqref{mod:sum_max} also satisfies hypothesis {(iii)} in Theorem~\ref{theo:conv}.
To that end, we shall keep in mind the following observation.
\begin{remark}\label{rem:LimitingModel}
For any given cluster point $\x$, there can be several subsequences of subgradients of the same function $ \f^2_{j\ell}$ converging to different cluster points, all belonging to $\pclarke \f^2_{j\ell}(\x)$ (analogously for the function $\c^2_{j\ell}$).  As a result, there can be several different models $\bar\Model(\cdot;\x)$, any of which suffices to show that hypothesis {(iii)} in Theorem~\ref{theo:conv} is satisfied. 
Indeed,
recall that
given a bounded sequence $\{x^k\}_k$, then the sequences $\{\zeta_{{j\ell}}^k\}_k$ and $\{\xi_{{j\ell}}^k\}_k$ of  subgradients are bounded as well. 
Given an arbitrary cluster point $\x$ of $\{x^k\}_k$, we can therefore extract convergent subsequences,
$\{x^{k_i}\}_i$, $\{\zeta_{{j\ell}}^{k_i}\}_i$ and $\{\xi_{{j\ell}}^{k_i}\}_i$
such that 
\begin{equation}\label{eq:myNiceSubseq}
\lim_{i } x^{k_i} =\x,\quad
\lim_{i } \zeta_{{j\ell}}^{k_i} =\bar \zeta_{{j\ell}} \in \pclarke \f^2_{j\ell}(\x), \quad \mbox{and}\quad
\lim_{i } \xi_{{j\ell}}^{k_i} =\bar \xi_{{j\ell}} \in \pclarke \c^2_{j\ell}(\x),\quad \forall j,\ell
\end{equation}
because the Clarke subdifferential is locally bounded and outer-semicontinuous \cite[Prop. 2.1.5]{Clarke_1987}. 
This limiting tuple of subgradients defines $\bar a$: under assumption~\eqref{A(x)=n}, such tuple is associated with some $a^{k_i'}\in A(x^{k_i'})$ (in the sense that $\lim_{i}a^{k_i'}\to\bar a$) and thus we can define $A(\x)$ as $\LimOut{A(x^{k_i})}$. 
Furthermore, as each index $a \in A(x^k)$ satisfies $\Model_a(x^k;x^k)=H(x^k;x^k)$, we have that $A_H(x^k)=A(x^k)$ for all $k$. All in all, we can conclude that
\[
A_H(\bar x)=A(\bar x) := \LimOut{A(x^{k_i})}.
\]
\mytriangle
\end{remark}
Hence, we can define the following model at cluster point $\x $ by employing those cluster points of subgradients defined in \eqref{eq:myNiceSubseq}:
\begin{align}
\bar\Model(y;\x)&:=\min_{a\in  A_H(\x)}\Model_a(y;\x) \nonumber\\
&=
\min_{a\in A_H(\x)}\max\col{
\begin{array}{ll}
\displaystyle\sum_{j=1}^{N_f}\max_{\ell \in F_j}\col{\f^1_{j\ell}(y)+  \f^2_{j\ell}(\x)+\inner{\bar \zeta_{{j\ell}}}{y-\x}}-\tau_f(\x),\\
\displaystyle\sum_{j=1}^{N_c}\max_{\ell \in C_j}\col{\c^1_{j\ell}(y)+ \c^2_{j\ell}(\x)+\inner{\bar \xi_{{j\ell}}}{y-\x}}
\end{array}
}.\label{eq:LimitingModel}
\end{align}
\begin{theorem}\label{theo:instance1}
    Given model $\Model$ in \eqref{mod:sum_max} satisfying~\eqref{A(x)=n}, assume hypotheses (i) and (ii) in Theorem \ref{theo:conv} hold. Then every cluster point $\x$ of the sequence $\{x^k\}_k$ computed by Algorithm \ref{algo} is M-critical. Furthermore, provided that an iterate $x^k$ is feasible, $\x$ is FM-critical.
\end{theorem}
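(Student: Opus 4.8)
The plan is to verify that the concrete model $\Model$ in \eqref{mod:sum_max} meets every requirement of Theorem~\ref{theo:conv} and then simply invoke that result. Assumption~\ref{assump-model}(a) was already observed to hold (each $\Model_a(\cdot;x)$ is convex with $\Model_a(x;x)=H(x;x)$, independently of the subgradient tuple), and Assumption~\ref{assump-model}(b) is precisely Lemma~\ref{lem:model_ineq}. Solvability of subproblem~\eqref{spbm} is automatic in this setting: each inner problem~\eqref{eq:subproblems} minimizes a finite-valued convex function plus the strongly convex term $\frac{\mu^k}{2}\norm{\cdot-x^k}^2$ (with $\mu^k\geq\kappa>0$) over the nonempty closed convex set $X$, hence is strongly convex and coercive and admits a unique minimizer. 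Hypotheses (i) and (ii) are assumed. Therefore the only thing left to establish is hypothesis (iii), which is the heart of the argument.

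To produce the subsequence required in (iii), I would first use boundedness of $\{x^k\}_k$ together with the local boundedness and outer semicontinuity of the Clarke subdifferential, exactly as in Remark~\ref{rem:LimitingModel}. Since $|A(x^k)|\leq {\tt n}$ for all $k$ by \eqref{A(x)=n}, a pigeonhole argument gives a subsequence along which $|A(x^{k_i})|=m$ is constant; labelling the tuples $a_1^{k_i},\dots,a_m^{k_i}$ and passing to a further subsequence, each tuple converges, $a_p^{k_i}\to\bar a_p$, with the limiting subgradients lying in the corresponding Clarke subdifferentials at $\x$ as in \eqref{eq:myNiceSubseq}. This identifies $A_H(\x)=A(\x)=\LimOut{A(x^{k_i})}=\{\bar a_1,\dots,\bar a_m\}$ (recall $A_H(x^k)=A(x^k)$ because every tuple is active at its own base point) and fixes the limiting model $\bar\Model(\cdot;\x)=\min_{p=1,\dots,m}\Model_{\bar a_p}(\cdot;\x)$ as in \eqref{eq:LimitingModel}.

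With this subsequence in hand, I would verify the epigraphical nesting through Lemma~\ref{lemma:epi2}(ii): for an arbitrary $x\in X$ take the constant sequence $y^i\equiv x\in X$. For fixed $y$, each component $\Model_{a_p^{k_i}}(y;x^{k_i})$ depends continuously on the base point and the subgradient tuple, through $\f^2_{j\ell}(x)$, $\c^2_{j\ell}(x)$, $\tau_f(x)=f(x)+\rho[c(x)]_+$ (all continuous) and the inner products $\inner{\zeta_{j\ell}}{y-x}$, $\inner{\xi_{j\ell}}{y-x}$, so $\Model_{a_p^{k_i}}(y;x^{k_i})\to\Model_{\bar a_p}(y;\x)$ for each $p$. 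As the minimum over the fixed finite index set $\{1,\dots,m\}$ is continuous, $\Model(x;x^{k_i})\to\bar\Model(x;\x)$, whence $\limsup_i\Model(y^i;x^{k_i})+{\rm i}_X(y^i)=\bar\Model(x;\x)+{\rm i}_X(x)$. By Lemma~\ref{lemma:epi2}(ii) this yields $\epi{\bar\Model(\cdot;\x)+{\rm i}_X}\subseteq\LimInn{\epi{\Model(\cdot;x^{k_i})+{\rm i}_X}}$, i.e. hypothesis (iii). Theorem~\ref{theo:conv} then gives that $\x$ is M-critical, upgrading to FM-critical as soon as some iterate (hence, by Lemma~\ref{lemma:ineq}(i.1), the whole tail of the sequence) is feasible.

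The main obstacle is the bookkeeping in the construction of the limiting model: one must extract a single subsequence along which the cardinality of $A(x^{k_i})$ stabilises and every one of the at most ${\tt n}$ subgradient tuples converges simultaneously, and one must check that the resulting $\LimOut{A(x^{k_i})}$ is exactly the index set defining $\bar\Model(\cdot;\x)$. Once that identification is made, the epigraphical nesting reduces to elementary continuity, since the constant sequence $y^i\equiv x$ turns the required $\limsup$ inequality into a genuine limit that holds by joint continuity of the finitely many model pieces in the base point and the subgradients.
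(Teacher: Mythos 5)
Your proof is correct and follows essentially the same route as the paper's: extract convergent subsequences of the subgradient tuples via local boundedness and outer semicontinuity of the Clarke subdifferential (Remark~\ref{rem:LimitingModel}), identify $A_H(\x)=\LimOut{A(x^{k_i})}$ and the limiting model \eqref{eq:LimitingModel}, verify hypothesis (iii) through continuity of the finitely many pieces and Lemma~\ref{lemma:epi2}(ii), and invoke Theorem~\ref{theo:conv}. Your pigeonhole stabilization of $|A(x^{k_i})|$ and the explicit check of subproblem solvability are welcome elaborations of bookkeeping the paper leaves implicit, but they do not change the argument.
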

{\bf Proof}
Given the subsequence $\{x^{k_i}\}_i$ from \eqref{eq:myNiceSubseq} and $\bar\Model(\cdot;\x)$ as in~\eqref{eq:LimitingModel}, the result follows from Theorem~\ref{theo:conv} once show that $\epi{\bar\Model(\cdot;\x)+{\rm i}_X}\subseteq\LimInn{\epi{\Model(\cdot;x^{k_i})+{\rm i}_X}}$. To this end, consider an arbitrary point of $y\in X$ and any sequence $\{y^i\}_i\subseteq X$ such that $y^i\to y$. For the specific sequences in
\eqref{eq:myNiceSubseq} we get (see  Remark~\ref{rem:LimitingModel})
\begin{align*}
\bar\Model(y;\x) & =\min_{a \in A_H(\bar x)}\Model_a(y;\bar x)
= \min_{a \in \LimOut{A(x^{k_i})}}\Model_a(y;\bar x)\\
& = \lim_i\min_{a \in A(x^{k_i})}\Model_a(y^i;x^{k_i})=\lim_i\Model(y^i;x^{k_i}),
\end{align*}
where the third equality follows from our construction of $\LimOut{A(x^{k_i})}$ and continuity of $\Model_a(\cdot;\cdot)$ on both arguments. 
Lemma \ref{lemma:epi2} item (ii) yields the wished inclusion.
\mybox

We stress that in the above theorem, the only theoretical gain in working with $|A(x^k)|>1$ (i.e., a nonconvex model) is the fact that $A_H(\x)$ in our criticality definition \ref{def:crit} can have more than one index. Thus, the criticality we get in practice might be more robust: a richer model will likely yield better results. In Section~\ref{sec:dcset}, we consider a specific DC setting and show that a richer model yields $B$-stationary, the strongest stationary condition in DC programming.

\subsection{Particular Instances}
Let us now consider simpler variants of problem~\eqref{pbm:sum_max}.

\paragraph{Case 1:}
Consider problem~\eqref{pbm:sum_max}  with  $N_f=N_c=1$ and $F_1=C_1=\{1\}$.
In this case, $f(x)=f_1(x)+f_2(x)$, $c(x)=c_1(x)+c_2(x)$ with $f_1,c_1$ convex and $f_2,c_2$ weakly-concave functions. Moreover, let us consider $f_2$ and $c_2$ to be smooth (i.e., continuously differentiable).
Model~\eqref{mod:sum_max} reduces to 
\[
\Model(y;x)=\max\col{\begin{array}{l}
f_1(y)+f_2(x)+\inner{\nabla f_2(x)}{y-x}-\tau_f(x),\\
c_1(y)+c_2(x)+\inner{\nabla c_2(x)}{y-x}
\end{array}}
\]
as $A(x)=\{(\nabla f_2(x),\nabla c_2(x))\}$. If $\x \in X$ is FM-critical for the model $\bar\Model(\cdot;\x)=\Model(\cdot;\x)$ (which coincides with the one in \eqref{eq:LimitingModel} due to  continuity of the gradients), we would have that
\[
\x \in\arg\min_{x \in X} \; \Model(x;\x);\quad 0\geq c(\x)= [c_1(\cdot)+c_2(\x)+\inner{\nabla c_2(x)}{\cdot-\x}](\x).
\]
In that case, and assuming the Constraint Qualification (CQ) $c(\x)=0\Longrightarrow 0 \not \in \partial c_1(\x )+\nabla c_2(\x) +N_X(\x)$ holds,
 there exists $\bar u \in \R$ such that the following generalized KKT system is satisfied
\[
\left\{
\begin{array}{llllll}
0 \in \partial f_1(\x) + \nabla f_2(\x) + \bar u[\partial c_1(\x)+\nabla c_2(\x)] + N_X(\x)\\
\x \in X,\; c(\x)\leq 0,\; \bar u\geq 0,\;  \bar u c(\x)=0.
\end{array}\right.
\]
Note that, as $ \partial f_1(\x) + \nabla f_2(\x)=\pclarke f(\x)$ and $ \partial c_1(\x) + \nabla c_2(\x)=\pclarke c(\x)$ \cite[Prop. 2.3.3]{Clarke_1987}, our FM-criticality boils down to the standard KKT optimality conditions (stationarity) in nonsmooth nonlinear programming, given the stated CQ holds.

In the setting of convex plus weakly-concave functions a bundle method with convergence guarantees was recently proposed in \cite{Ksenia_2023}. 

\paragraph{Case 2 (DC setting):}
Here, not only $f_1$ and $c_1$ but also $-f_2$ and $-c_2$ are assumed to be convex functions.
Accordingly, the threshold $\bar \mu$ in Assumption~\ref{assump-model} is zero. 
Furthermore, the criticality condition is as in the previous paragraph, replacing $\nabla f_2$ and $\nabla c_2$ by $\pclarke f_2$ and $\pclarke c_2$, boiling down to the usual criticality condition in DC-constrained DC programming \cite{Pang_2017,Javal_2021}.
As we discuss next, stronger conditions are possible after adding assumptions and making changes in the model $\Model$.

\section{Special DC Setting}\label{sec:dcset}
We have just shown that DC-constrained DC programming problems fit into formulation~\eqref{pbm:sum_max}, and thus the M-criticality coincides (when ${\tt n}=1$ in \eqref{A(x)=n}) with the usual criticality concept employed in the DC literature.
The goal here is to consider a special class of DC problems, a different model, and show that Algorithm~\ref{algo} obtains the sharpest stationarity definition in DC programming, i.e., $B$-stationarity. The family of DC programs we investigate is the one considered in~\cite{Pang_2017}\wlo{, where an algorithm is presented to compute $B$-stationary points of the same family of problems}. Consider problem~\eqref{pb} with
\begin{subequations}\label{specialDC}
\begin{equation}
\mbox{
$f(x):=f_1(x)+f_2(x)$, $c(x)=c_1(x)+c_2(x)$, $f_1,c_1:\Re^n\to \Re$ convex, and 
}\end{equation}
\begin{equation}\label{f2:conc}
f_2(x):=\min_{j=1,\dots,m_f}\f_j(x)\quad\text{and}\quad c_2(x):=\min_{\ell=1,\dots,m_c}\c_\ell(x),
\end{equation}
\end{subequations}
where all $\f_j,\c_\ell:\R^n\to\R$ are continuously differentiable concave functions.
For $\varepsilon\geq 0$, let us define the following index set:
\[
A_f^\varepsilon(x):=\col{j: \f_j(x)\leq f_2(x)+\varepsilon},
\quad\mbox{and}\quad
A_c^\varepsilon(x):=\col{\ell: \c_\ell(x)\leq c_2(x)+\varepsilon}.
\]
For $\varepsilon=0$, these sets reduce to the active index sets of each function.
Let $\x \in X^c$ be a point satisfying the following \emph{pointwise Slater} CQ:  there exist $\ell\in A_c^0(\x)$, and $d^\ell \in \mathcal{T}_X(\x)$ such that $c'_1(\x;d^\ell) < \inner{\nabla \c_\ell(\x)}{d^\ell}$. 
Under this CQ, it was shown in \cite[Lem. 2.1]{Ackooij_Oliveira_2022} that $\x$ is an $B$-stationary point to problem \eqref{pbm:sum_max} under the setting of \eqref{specialDC} if and and only if $\x$ solves the following convex subproblems for all $j\in A_f^0(\x)$ and $\ell\in A_c^0(\x)$ (see also \cite[Lem. 1]{Pang_2017} for a somehow stronger requirement):
\begin{equation}\label{sbpmij}
\left\{
\begin{array}{lll}
\displaystyle \min_{x \in X} & f_1(x)+ \f_j(\x)+\inner{\nabla \f_j(\x)}{x-\x}\\
\mbox{s.t.} &  c_1(x)+\c_\ell(\x)+\inner{\nabla \c_\ell(\x)}{x-\x} \leq 0.
\end{array}
\right.
\end{equation}
With this in mind, we propose to furnish Algorithm~\ref{algo} with the following model for $H$:
\begin{equation}\label{model:dc}
\Model(y;x)=\max\col{\begin{array}{l}
f_1(y)+\displaystyle\min_{j\in A_f^\varepsilon(x)}  [\f_j(x)+\inner{\nabla \f_j(x)}{y-x}]-\tau_f(x),\\
c_1(y)+\displaystyle\min_{\ell\in A_c^\varepsilon(x)} [\c_2(x)+\inner{\nabla \c_\ell(x)}{y-x}]
\end{array}}.
\end{equation}
This nonconvex model can be equivalently written as
\begin{align}
\Model(y;x)& =\displaystyle\min_{j\in A_f^\varepsilon(x)} \displaystyle\min_{\ell\in A_c^\varepsilon(x)}  \max\col{\begin{array}{l}
f_1(y)+ [\f_j(x)+\inner{\nabla \f_j(x)}{y-x}]-\tau_f(x),\\
c_1(y)+[\c_2(x)+\inner{\nabla \c_\ell(x)}{y-x}]
\end{array}} \nonumber \\
&=\displaystyle\min_{j\in A_f^\varepsilon(x)} \displaystyle\min_{\ell\in A_c^\varepsilon(x)}  \Model_{j\ell}(y;x),\label{mod:DC_B}
\end{align}
with every $\Model_{j\ell}(\cdot;x)$ convex. Thus, we are in the setting of Assumption \ref{assump-model} with $\bar\mu=0$ and $A(x)=A^\varepsilon_f(x)\times A^\varepsilon_c(x)$. 
\begin{lemma}\label{lem:epsilon}
    Let $\varepsilon\geq 0$ and suppose that $\x$ is FM-critical. Furthermore, suppose that the pointwise Slater CQ is satisfied if $c(\x)= 0$. Then $\x$ is an $B$-stationary point to problem~\eqref{pb} having structure~\eqref{specialDC}. 
\end{lemma}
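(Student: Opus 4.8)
The plan is to reduce the statement to the subproblem characterisation of $B$-stationarity recalled just before \eqref{sbpmij}: by \cite[Lem. 2.1]{Ackooij_Oliveira_2022}, under the pointwise Slater CQ, $\x$ is $B$-stationary to \eqref{pb} with structure \eqref{specialDC} if and only if $\x$ solves the convex subproblem \eqref{sbpmij} for every $j\in A_f^0(\x)$ and every $\ell\in A_c^0(\x)$. Hence it suffices to prove that FM-criticality forces $\x$ to be a global minimiser of each of these finitely many convex programs.

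First I would unpack the criticality. Since $\x$ is FM-critical we have $c(\x)\le0$, so $[c(\x)]_+=0$ and $H(\x;\x)=\max\{-\rho[c(\x)]_+,\,c(\x)\}=0$. Evaluating the convex piece $\Model_{j\ell}$ of \eqref{mod:DC_B} at $y=\x$ gives $\Model_{j\ell}(\x;\x)=\max\{\f_j(\x)-f_2(\x),\,c_1(\x)+\c_\ell(\x)\}$. For $(j,\ell)\in A_f^0(\x)\times A_c^0(\x)$ the first entry equals $0$ (as $\f_j(\x)=f_2(\x)$) and the second equals $c(\x)\le0$, so $\Model_{j\ell}(\x;\x)=0=H(\x;\x)$; consequently $A_f^0(\x)\times A_c^0(\x)\subseteq A_H(\x)$. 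Moreover, for these indices the identities $\f_j(\x)=f_2(\x)$ and $\c_\ell(\x)=c_2(\x)$ make $\tau_f(\x)$ coincide with $[f_1(\x)+\f_j(\x)]+\rho[c_1(\x)+\c_\ell(\x)]_+$, so that $\Model_{j\ell}(\cdot;\x)$ is precisely the improvement function of the convex subproblem \eqref{sbpmij}. By Definition~\ref{def:crit} (M-criticality) we then obtain $\x\in\argmin_{x\in X}\Model_{j\ell}(x;\x)$ for all $j\in A_f^0(\x)$ and $\ell\in A_c^0(\x)$.

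Next I would turn ``minimiser of the improvement function'' into ``solves \eqref{sbpmij}''. Abbreviate the two convex pieces by $\tilde f_j(y):=f_1(y)+\f_j(\x)+\inner{\nabla\f_j(\x)}{y-\x}$ and $\tilde c_\ell(y):=c_1(y)+\c_\ell(\x)+\inner{\nabla\c_\ell(\x)}{y-\x}$, so that $\Model_{j\ell}(\cdot;\x)=\max\{\tilde f_j-\tilde f_j(\x),\,\tilde c_\ell\}$ with $\tilde c_\ell(\x)=c(\x)\le0$. The optimality condition for minimising the convex function $\Model_{j\ell}(\cdot;\x)$ over $X$ reads $0\in\partial\Model_{j\ell}(\x;\x)+N_X(\x)$. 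If $c(\x)<0$, the constraint piece is strictly below the objective piece at $\x$, only $\partial\tilde f_j(\x)$ is active, and the condition is exactly the KKT system of \eqref{sbpmij} with zero multiplier; convexity then gives that $\x$ solves \eqref{sbpmij}, with no CQ needed (consistent with the CQ being assumed only when $c(\x)=0$). If $c(\x)=0$, both pieces are active, and expanding the subdifferential of the max produces $\theta\in[0,1]$, $g_f\in\partial\tilde f_j(\x)$, $g_c\in\partial\tilde c_\ell(\x)$, $\nu\in N_X(\x)$ with $\theta g_f+(1-\theta)g_c+\nu=0$. When $\theta>0$, dividing by $\theta$ exhibits a genuine KKT triple for \eqref{sbpmij} with multiplier $u=(1-\theta)/\theta\ge0$ and complementarity $u\,\tilde c_\ell(\x)=0$ (which holds since $\tilde c_\ell(\x)=0$), so by convexity $\x$ solves \eqref{sbpmij}.

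The main obstacle, and the single place where the pointwise Slater CQ is indispensable, is excluding the degenerate case $\theta=0$, which would read $0\in\partial\tilde c_\ell(\x)+N_X(\x)$, that is, $\x$ minimises the active linearised constraint $\tilde c_\ell$ over $X$; in that situation $\x$ can minimise the improvement function while failing to solve \eqref{sbpmij}, since a feasible direction may drive $\tilde f_j$ down while $\tilde c_\ell$ stays at $0$. The CQ furnishes a tangent direction $d\in\mathcal{T}_X(\x)$ along which the active linearised constraint strictly decreases, that is, $\tilde c_\ell'(\x;d)<0$, which is incompatible with $0\in\partial\tilde c_\ell(\x)+N_X(\x)$ and hence forces $\theta>0$. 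Having established that $\x$ solves \eqref{sbpmij} for every $j\in A_f^0(\x)$ and $\ell\in A_c^0(\x)$, the characterisation \cite[Lem. 2.1]{Ackooij_Oliveira_2022} yields that $\x$ is $B$-stationary, completing the argument.
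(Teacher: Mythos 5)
Your proof is correct, and it shares the paper's overall skeleton — verify that $A_f^0(\x)\times A_c^0(\x)\subseteq A_H(\x)$ so that FM-criticality yields $\x\in\argmin_{x\in X}\Model_{j\ell}(x;\x)$ for every active pair, deduce that $\x$ solves the convex subproblems \eqref{sbpmij}, and conclude via the characterization of $B$-stationarity from \cite{Ackooij_Oliveira_2022} — but you execute the key middle step by a genuinely different argument. The paper stays primal and value-based: from $\min_{x\in X}\Model_{j\ell}(x;\x)=\Model_{j\ell}(\x;\x)=0$ it reads off that $f_1(x)+\f_j(\x)+\inner{\nabla\f_j(\x)}{x-\x}\geq f(\x)$ for every $x\in X$ satisfying the linearized constraint, and then invokes \cite[Prop. 2.8]{Ackooij_Oliveira_2022}. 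You instead pass to the dual side: $0\in\partial\Model_{j\ell}(\x;\x)+N_X(\x)$, the max-rule producing $\theta g_f+(1-\theta)g_c+\nu=0$, with $\theta>0$ rescaled into a KKT multiplier $u=(1-\theta)/\theta$ for \eqref{sbpmij}, which is sufficient for global optimality by convexity. This buys something real: your $\theta=0$ exclusion makes explicit exactly where the Slater CQ is indispensable, a point the paper's ``Hence'' glosses over — when $\tilde c_\ell(x)=0$, the inequality $\Model_{j\ell}(x;\x)\geq 0$ gives no information on the objective piece, so the value-based implication itself needs the CQ (primally one would perturb toward a strictly feasible point and pass to the limit; your tangent-cone contradiction with $0\in\partial\tilde c_\ell(\x)+N_X(\x)$ does the same job in dual form), while your $c(\x)<0$ case correctly shows no CQ is needed there. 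Two minor reading choices you made, both the intended ones: the paper's displayed CQ inequality $c_1'(\x;d^\ell)<\inner{\nabla\c_\ell(\x)}{d^\ell}$ carries the sign convention of the difference-of-convex formulation in \cite{Ackooij_Oliveira_2022}, and your interpretation as strict descent of the linearized constraint, $c_1'(\x;d)+\inner{\nabla\c_\ell(\x)}{d}<0$, is the one under which the lemma holds; likewise you tacitly apply the CQ for each active $\ell$, which is what the per-$\ell$ notation $d^\ell$ in the paper suggests despite its existential phrasing.
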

\begin{proof}
Note that model~\eqref{mod:DC_B} satisfies, for all $\varepsilon\geq 0$, Assumption \ref{assump-model} (the threshold $\bar\mu$ is zero in the DC setting). Recalling Remark \ref{remark:subproblems} and assuming $\x$ to be FM-critical
\[
\x \in \displaystyle \arg\min_{x \in X}\; \Model_{j\ell}(x;\x) \quad \forall\, j \in A_f^0(\x),\;\forall\,\ell \in A_x^0(\x)
\]
and   $c(\x)\leq 0$, yielding $\Model_{j\ell}(\x;\x)= H(\x;\x)=0$. Hence,
\[
 f_1(x)+ \f_j(\x)+\inner{\nabla \f_j(\x)}{x-\x}\geq f(\x) \quad \forall\; x \in X:\; c_1(x)+\c_\ell(\x)+\inner{\nabla \c_\ell(\x)}{x-\x}\leq 0
\]
and all $j \in A_f^0(\x)$ and $\ell \in A_x^0(\x)$.
In particular, $\x$ solves
$$
\left\{
\begin{array}{lll}
\displaystyle \min_{x \in X} & f_1(x)+ \f_j(\x)+\inner{\nabla \f_j(\x)}{x-\x}\\
\mbox{s.t.} &  c_1(x)+\c_\ell(\x)+\inner{\nabla \c_\ell(\x)}{x-\x}\leq 0.
\end{array}
\right.
$$
for all $j\in A^0_f(\x)$ and $\ell\in A^0_c(\x)$. The result follows from  \wlo{\cite[Prop. 2.8]{Ackooij_Oliveira_2022}}.
\mybox
\end{proof}

\begin{theorem}
    Consider model $\Model$ in \eqref{model:dc} and assume hypothesis (i) and (ii) in Theorem \ref{theo:conv} hold. Then every cluster point $\x$ of the sequence $\{x^k\}_k$ computed by Algorithm \ref{algo} is M-critical. Furthermore, if $\x$ is feasible and the pointwise Slater CQ holds, then $\x$ is B-stationary.
\end{theorem}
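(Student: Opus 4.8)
The plan is to reduce the statement to the two results already available for this setting: Theorem~\ref{theo:conv} for M-criticality, and Lemma~\ref{lem:epsilon} for the upgrade to $B$-stationarity. Since the model \eqref{model:dc} has already been rewritten in the form \eqref{mod:DC_B} as $\Model(\cdot;x)=\min_{(j,\ell)\in A_f^\varepsilon(x)\times A_c^\varepsilon(x)}\Model_{j\ell}(\cdot;x)$ with each $\Model_{j\ell}(\cdot;x)$ convex, and since it satisfies Assumption~\ref{assump-model} with $\bar\mu=0$, the only hypothesis of Theorem~\ref{theo:conv} that still needs checking is (iii), the epigraphical-nesting inclusion along a suitable subsequence. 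This mirrors the role of Theorem~\ref{theo:instance1} in the sum-max setting; the essential new difficulty is that here the index set $A(x)=A_f^\varepsilon(x)\times A_c^\varepsilon(x)$ moves with the stability center $x$, whereas in Theorem~\ref{theo:instance1} only the subgradients varied while the index sets $F_j,C_j$ stayed fixed.

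First I would stabilize the index sets. Given a cluster point $\x$ and any subsequence $x^{k_i}\to\x$, since there are only finitely many subsets of $\{1,\dots,m_f\}$ and of $\{1,\dots,m_c\}$, a pigeonhole refinement produces a further subsequence (not relabeled) along which $A_f^\varepsilon(x^{k_i})\equiv\hat A_f$ and $A_c^\varepsilon(x^{k_i})\equiv\hat A_c$ are constant; I then set $A(\x):=\hat A_f\times\hat A_c=\LimOut{A(x^{k_i})}$, exactly as in Remark~\ref{rem:LimitingModel}. The crucial point, and the reason the model is equipped with a strictly positive tolerance $\varepsilon$, is that the active set at the limit is then captured: if $j\in A_f^0(\x)$ then $\f_j(\x)=f_2(\x)$, so by continuity of $\f_j$ and $f_2$ the gap $\f_j(x^{k_i})-f_2(x^{k_i})\to0<\varepsilon$ forces $j\in A_f^\varepsilon(x^{k_i})=\hat A_f$ eventually; hence $A_f^0(\x)\subseteq\hat A_f$ and, likewise, $A_c^0(\x)\subseteq\hat A_c$. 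This inclusion is precisely what fails when $\varepsilon=0$, where the active set may shrink in the limit.

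Next I would verify hypothesis (iii). With $A_H(\x)\subseteq A(\x)$ as in Definition~\ref{def:crit} and $\bar\Model(\cdot;\x)=\min_{a\in A_H(\x)}\Model_a(\cdot;\x)$, Lemma~\ref{lemma:epi2}(ii) reduces the inclusion $\epi{\bar\Model(\cdot;\x)+{\rm i}_X}\subseteq\LimInn{\epi{\Model(\cdot;x^{k_i})+{\rm i}_X}}$ to producing, for each $y\in X$, a sequence $y^i\to y$ in $X$ with $\limsup_i\Model(y^i;x^{k_i})\leq\bar\Model(y;\x)$. Taking $y^i\equiv y$ works: along the refined subsequence the index set is frozen at $\hat A_f\times\hat A_c$, and each $\Model_{j\ell}(y;\cdot)$ depends continuously on the stability center (the gradients $\nabla\f_j,\nabla\c_\ell$ are continuous and $f_1,c_1,\tau_f$ are continuous), so
$$
\lim_i\Model(y;x^{k_i})=\min_{(j,\ell)\in\hat A_f\times\hat A_c}\Model_{j\ell}(y;\x)\leq\bar\Model(y;\x),
$$
the final inequality holding because $A_H(\x)\subseteq\hat A_f\times\hat A_c$ and the minimum over a smaller set is larger. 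Theorem~\ref{theo:conv} then yields that $\x$ is M-critical.

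Finally, for the $B$-stationary claim, suppose $\x$ is feasible (guaranteed when the algorithm starts feasible), so that $c(\x)\leq0$ and $H(\x;\x)=0$. A direct computation gives, for active indices $(j,\ell)\in A_f^0(\x)\times A_c^0(\x)$, that $\Model_{j\ell}(\x;\x)=\max\{\f_j(\x)-f_2(\x),\,c(\x)\}=0=H(\x;\x)$; combined with $A_f^0(\x)\subseteq\hat A_f$ and $A_c^0(\x)\subseteq\hat A_c$ from the second paragraph, this gives $A_f^0(\x)\times A_c^0(\x)\subseteq A_H(\x)$. Hence M-criticality delivers $\x\in\arg\min_{x\in X}\Model_{j\ell}(x;\x)$ for every $j\in A_f^0(\x)$ and $\ell\in A_c^0(\x)$, which is exactly the FM-criticality hypothesis invoked in Lemma~\ref{lem:epsilon}; that lemma, under the pointwise Slater CQ, concludes that $\x$ is $B$-stationary. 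I expect the main obstacle to be the second paragraph: correctly handling the moving $\varepsilon$-active sets and pinning down that $\varepsilon>0$ is what guarantees $A_f^0(\x)\subseteq\hat A_f$, since this outer-semicontinuity of the active set is the mechanism that lifts ordinary M-criticality all the way to $B$-stationarity.
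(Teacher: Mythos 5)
Your proposal is correct and follows essentially the same route as the paper: both reduce the theorem to verifying hypothesis (iii) of Theorem~\ref{theo:conv} via Lemma~\ref{lemma:epi2}(ii), both rest on the key observation that $\varepsilon>0$ together with continuity of the $\f_j,\c_\ell$ forces $A_f^0(\x)\subseteq A_f^\varepsilon(x^{k_i})$ and $A_c^0(\x)\subseteq A_c^\varepsilon(x^{k_i})$ for large $i$, and both finish with Lemma~\ref{lem:epsilon} under the pointwise Slater CQ. Your pigeonhole refinement freezing the index sets along a subsequence is a harmless variant of the paper's neighborhood argument (the paper bounds $\Model(\cdot;x^{k_i})$ by the minimum over $A_f^0(\x)\times A_c^0(\x)$ directly, without stabilizing the $\varepsilon$-active sets), and your explicit verification that $A_f^0(\x)\times A_c^0(\x)\subseteq A_H(\x)$ makes precise a step the paper simply asserts.
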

{\bf Proof}
The result will follow from Theorem  \ref{theo:conv} once hypothesis (iii) is proven. To this end, let $\{x^{k_i}\}_i$ be a subsequence of $\{x^k\}_k$ such that $\lim_ix^{k_i}=\x$. Observe that continuity of \wlo{$\f_j$ and $\c_\ell$}, and $\varepsilon>0$ fixed in the model's definition ensure the existence of a neighborhood $\mathcal{N}$ of $\x $ such that $A_f^0(\x) \subset A_f^\varepsilon(y)$ and  $A_c^0(\x) \subset A_c^\varepsilon(y)$ for all $y \in \mathcal{N}\cap X$ (such set has nonempty relative interior as $\x$ belongs to the interior of $\mathcal N$ and $X$ is convex).
 Therefore, for $i$ large enough, 
 \begin{align*}
 \Model(y;x^{k_i}) &= \displaystyle\min_{j\in A_f^\varepsilon(x^{k_i})} \displaystyle\min_{\ell\in A_c^\varepsilon(x^{k_i})}  \Model_{j\ell}(y;x^{k_i}) 
  \leq \displaystyle\min_{j\in A_f^0(\x)} \displaystyle\min_{\ell\in A_c^0(\x)}  \Model_{j\ell}(y;x^{k_i}).
 \end{align*}
 Let $x \in X$ be arbitrary, and $\{y^i\}_i\subseteq X$ such that $y^i\to x$. By replacing $y$ with $y^i$ above and passing to the limit superior with $i$ going to infinity we get, due to continuity of $\Model_{j\ell}$ in both arguments,
 \[
 \limsup_i  \Model(y^i;x^{k_i})\leq \displaystyle\min_{j\in A_f^0(\x)} \displaystyle\min_{\ell\in A_c^0(\x)}  \limsup_i  \Model_{j\ell}(y^i;x^{k_i})
 = \displaystyle\min_{j\in A_f^0(\x)} \displaystyle\min_{\ell\in A_c^0(\x)}    \Model_{j\ell}(x;\x)
 =\bar\Model(x;\x).
 \]
 Noticing that $A_H(\x)=A^0_f(\x)\times A_c^0(\x)$ in Definition \ref{def:crit}, and recalling item (ii) in Lemma \ref{lemma:epi2}, the result follows.
Under the extra assumption of $\x$ being feasible and holding Slater CQ, we conclude from Lemma \ref{lem:epsilon}.
\mybox

\section{Optimization Problems from Classification Applications}\label{sec:comp}
In this section, we consider problem~\eqref{pb} with functions given by
\begin{equation}\label{eq:pbcomp}
f(x):=  \sum_{j=1}^{N_f} G(\f^1_{j}(x)+\f_{j}^2(x)), \quad
c(x):=  \sum_{j=1}^{N_c} R(\c^1_{j}(x)+\c^2_{j}(x)),
\end{equation}
with $\f^1_{j},\c^1_{j}:\Re^n\to \Re$ being convex functions, $\f^2_{j},\c^2_{j}:\Re^n\to \Re$ being concave, and
$G,R:\Re\to \Re$ being convex and nondecreasing one-dimensional functions.
Problems of this form appear in machine learning applications such as classification \cite[\S 6.I]{RoysetWets_2022}.   

Given $\zeta_{j}\in \pclarke \f^2_j(x)$ and $\xi_{j}\in \pclarke \c^2_j(x)$, we define the following model
\begin{equation}\label{mod:comp}
\Model_a(y;x):=\max\col{
\begin{array}{lll}
 \sum_{j=1}^{N_f} G(\f^1_{j}(y)+\f_j^2(x)+\inner{\zeta_{j}}{y-x}) -\tau_f(x)\\
 \sum_{j=1}^{N_c} R(\c^1_{j}(y)+\c_j^2(x)+\inner{\xi_{j}}{y-x}) 
\end{array}
},
\end{equation}
where $a\in A(x)\subseteq \col{(\zeta,\xi):\zeta_{{j}}\in \pclarke \f^2_{j}(x)\;,\;\xi_{{j}} \in \pclarke \c^2_{j}(x)\mbox{ for all }j}$.
Thus, we consider the model $\Model(y;x)=\min_{a\in A(x)}\Model_a(y;x)$ by assuming that $1\leq |A(x)|\leq {\tt n}$ (cf. condition~\eqref{A(x)=n}). We see that Assumption \ref{assump-model}(a) holds. Note also that, as $\f^1_{j}(y)+\f^2_{j}(y)\leq\f^1_{j}(y)+\f_j^2(x)+\inner{\zeta_{j}(x)}{y-x}$ and $G$ is nondecreasing (equivalently for $c$ and $R$), Assumption \ref{assump-model}(b) holds for $\bar\mu=0$.

\begin{remark}\label{rem:LimitingModel2}
As we clarified in Remark \ref{rem:LimitingModel}, given a cluster point $\x$, we can always ensure \eqref{eq:myNiceSubseq} for the sequences of subgradients composing model \eqref{mod:comp}. Considering $\bar\zeta_{j}\in \pclarke \f^2_j(\x)$ and $\bar\xi_{j}\in \pclarke \c^2_j(\x)$, the corresponding cluster subgradient vectors for every $j$, we define $A(\x)$ to be the set of such cluster points provided by the subsequences $\{\zeta_j^{k_i}\}_i$ and $\{\xi_j^{k_i}\}_i$:
$$
\bar\Model(x;\x)=\min_{a\in A_H(\x)}\max\col{
\begin{array}{lll}
 \sum_{j=1}^{N_f} G(\f^1_{j}(x)+\f_2(\x)+\inner{\bar\zeta_j}{x-\x}) -\tau_f(x)\\
 \sum_{j=1}^{N_c} R(\c^1_{j}(x)+\c_2(\x)+\inner{\bar\xi_{j}}{x-\x}) 
\end{array}
}.
$$
As in Remark~\ref{rem:LimitingModel}, we can take $A_H(\x)=A(\x)=\LimOut{A(x^{k_i})}$ for the subsequence $\{x^{k_i}\}_i$ converging to $\x$.
\mytriangle\end{remark}

\begin{theorem}\label{th:comp}
Consider Algorithm~\ref{algo} furnished with model~\eqref{mod:comp} applied to problem~\eqref{pb} with  $f$ and $c$ given in \eqref{eq:pbcomp}. Assume that the sequence $\{x^k\}_k$ computed by the algorithm is bounded. Then, any cluster point of $\{x^k\}_k$ is M-critical.
\end{theorem}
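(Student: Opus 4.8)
The plan is to derive the result from the general convergence Theorem~\ref{theo:conv}, exactly as in the proof of Theorem~\ref{theo:instance1} for the sum-of-max model, since the composite model~\eqref{mod:comp} has already been shown to satisfy Assumption~\ref{assump-model} (part (a) by construction and part (b) with $\bar\mu=0$, because $G,R$ are nondecreasing and the inner terms upper-linearize the concave $\f^2_j,\c^2_j$). Hypothesis (i) of Theorem~\ref{theo:conv} is the assumed boundedness of $\{x^k\}_k$, and hypothesis (ii) is in force through the prox-parameter update rule; so the only substantial task is to establish hypothesis (iii), namely the epigraphical nesting
$$\epi{\bar\Model(\cdot;\x)+{\rm i}_X}\subseteq\LimInn{\epi{\Model(\cdot;x^{k_i})+{\rm i}_X}}$$
at an arbitrary cluster point $\x$, with $\bar\Model(\cdot;\x)$ the limiting model recorded in Remark~\ref{rem:LimitingModel2}.

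First I would fix a cluster point $\x$ and, following Remark~\ref{rem:LimitingModel2} (which transports the construction of Remark~\ref{rem:LimitingModel} to the present setting), extract a subsequence $\{x^{k_i}\}_i\to\x$ along which all subgradient tuples converge to limiting tuples with components $\bar\zeta_{j}\in\pclarke\f^2_j(\x)$ and $\bar\xi_{j}\in\pclarke\c^2_j(\x)$; this is legitimate because the Clarke subdifferential is locally bounded and outer-semicontinuous, and the cardinality bound~\eqref{A(x)=n} lets me track the finitely many tuples. With this I set $A_H(\x)=A(\x)=\LimOut{A(x^{k_i})}$ and record that each $\Model_a(\cdot;\cdot)$ is jointly continuous, since $G,R$ are finite convex (hence continuous) one-dimensional functions, $\f^1_j,\c^1_j$ are convex, $\f^2_j,\c^2_j$ are concave, and $\tau_f$ is continuous.

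Next I would invoke Lemma~\ref{lemma:epi2}(ii): it suffices, for every $x\in X$, to exhibit $\{y^i\}_i\subseteq X$ with $y^i\to x$ and $\limsup_i\Model(y^i;x^{k_i})\leq\bar\Model(x;\x)$. Taking any such $y^i\to x$, I would establish the stronger equality
$$\bar\Model(x;\x)=\min_{a\in A_H(\x)}\Model_a(x;\x)=\min_{a\in\LimOut{A(x^{k_i})}}\Model_a(x;\x)=\lim_i\min_{a\in A(x^{k_i})}\Model_a(y^i;x^{k_i})=\lim_i\Model(y^i;x^{k_i}),$$
which immediately yields the required inequality and hence hypothesis (iii). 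Theorem~\ref{theo:conv} then delivers M-criticality of $\x$.

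The hard part will be the third equality in the display above, i.e. commuting the minimum over the index set with the limit in $i$: one must argue that every limiting tuple in $A(\x)$ arises as the limit of tuples drawn from $A(x^{k_i})$ and, conversely, that every convergent subsequence of such tuples has its limit in $A(\x)$, so that the finitely many (at most ${\tt n}$) branch values converge to the branch values of $\bar\Model$. This is precisely what the $\LimOut$-based definition of $A(\x)$ in Remark~\ref{rem:LimitingModel2} is engineered to guarantee, together with the joint continuity of the $\Model_a$. The outer composition by $G$ and $R$ poses no additional difficulty, as it only alters the continuous functional form of each branch and leaves the index-tracking argument untouched.
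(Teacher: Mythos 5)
Your proposal is correct and takes essentially the same route as the paper: its proof of Theorem~\ref{th:comp} likewise reduces everything to Theorem~\ref{theo:conv} by verifying hypothesis (iii) through the limiting model of Remark~\ref{rem:LimitingModel2}, establishing the same key identity $\lim_i\Model(y^i;x^{k_i})=\bar\Model(y;\x)$ via the $\LimOut$-construction of $A_H(\x)$ and joint continuity of the $\Model_a$ (exactly as in the proof of Theorem~\ref{theo:instance1}), and then invoking Lemma~\ref{lemma:epi2}(ii). Your write-up is merely more explicit than the paper's brief proof about the subsequence extraction, the role of the cardinality bound~\eqref{A(x)=n}, and the harmlessness of the outer compositions $G$ and $R$, but the substance coincides.
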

{\bf Proof}
As in the proof of Theorem~\ref{theo:instance1}, we see that 
$
\lim_i\Model(y^i;x^{k_i})=\bar\Model(y;\x)
$
for any sequence $\{y^i\}_i\subseteq X$ such that $y^i\to y$. By relying on Lemma \ref{lemma:epi2} item (ii) we get hypothesis (iii) in Theorem \ref{theo:conv}. Hence, this last theorem gives the stated result.
\mybox

\section{Numerical Experiments}\label{sec:num}
This section presents some preliminary numerical experiments \wlo{pertaining to the model discussed in Section \ref{sec:appl}} to assess the practical performance of our approach. We investigate two different (classes of) stochastic problems fitting the setting in \eqref{pbm:sum_max}.
A chance-constrained model is considered for a gas network problem from \cite{LukMartHolgRene_2018}, and a buffered probability optimization model for the design of a cantilever beam-bar system studied in \cite{Song_2003} and \cite{Byun_Oliveira_Royset_2023}.

Our numerical experiments are performed in {\tt MATLAB R2022b} on a computer Intel Core i9 2.50GHz under Windows 11 Pro.
The convex optimization solver called at every iteration of our approach is the bundle algorithm described in Appendix~\ref{ap:bundle}. 
The algorithm's parameters are set differently for each family of problems.

\subsection{Chance-Constrained Optimization Model}

Let us consider that a random vector ${\boldsymbol\omega}$ belongs in a certain probability space. Given a function $G$ depending on both $x\in\R^n$ and $\boldsymbol \omega$, we say that $x$ is a success for a given scenario $\omega$ if $G(x;\omega)\leq 0$ and it is a failure otherwise. The uncertainty arising from this problem may be dealt with the \textit{chance constraint}
$$
\mathbb{E}[{\pmb 1}_{(0,+\infty)}(G(x;{\boldsymbol \omega}))]=\mathbb{P}[G(x;{\boldsymbol\omega})> 0]\leq \alpha
$$
for a given $\alpha\in(0,1)$, which means that the probability of failure for the decision variable $x$ is allowed to be, at most, the threshold $\alpha$. Chance constraints arise in many real-life optimization problems, such as electricity network expansion, energy management, mineral blending, structural systems and other operating engineering systems  \cite{vanAckooij_Henrion_Moller_Zorgati_2011b,vanAckooij_Berge_Oliveira_Sagastizabal_2017,Prekopa_2003,Rockafellar_Royset_2010}. The probabilistic constraint is traditionally faced through the \textit{Sample Average Approximation (SAA)} (see \cite[\S  5]{SDR09}), which leads to deterministic constraint as follows
\begin{equation}\label{eq:prob}
\sum_{j=1}^Np_j{\pmb 1}_{(0,+\infty)}[G(x;\omega^j)]\leq \alpha,
\end{equation}
where $\{\omega^j\}_{j=1}^N$ is a set of $N$ realizations of the random vector ${\boldsymbol \omega}$, and $p_j$ is the probability of scenario $\omega^j$. It is clear that the above constraint does not fit \eqref{pbm:sum_max} as ${\pmb 1}_{(0,+\infty)}$ is not even a continuous function. Nevertheless, this issue will be overcome using a well-known nondecreasing $\mathcal C^2$ approximation of ${\pmb 1}_{(0,+\infty)}$, $\psi_\theta:\R\to(0,1)$ for $\theta>0$, given by

$$
\psi_\theta(x)=\frac{1}{1+\exp\left(-\frac{x}{\theta}\right)}.
$$
 In the following example, we will take $\theta=0.1$ and a sample with $N=10000$ scenarios.

\paragraph{Gas problem:} \label{pb:gas}

In this example, we consider an $n$-node connected tree (a graph with no loops) representing a gas exit network as described in \cite{ClauHolRene_2016,LukMartHolgRene_2018} (see Figure \ref{fig:trees}).  The origin node (labeled as $0$) is the one injecting the gas that flows through the pipes to the other nodes, where the gas exits with a certain pressure. The exit load of each node $\ell$ is random, and for each edge, a pressure drop is attributed. The goal is to find a reliable upper bound for the pressure flowing out of each node, while minimizing the sum of them.  Let $\omega^j$ be a realization of the random vector $\boldsymbol{\omega}$ representing the exit loads. A vector $x \in \R^n$  is considered a success  (for such a realization $\omega^j$) if the inequalities
\begin{equation}\label{eq:alpha}
    G_\ell(x,{\omega^j}):=v_\ell({\omega^j})-x_\ell^2\leq 0
\end{equation}
hold for all $\ell=0,\dots,n-1$, where $v_0(\boldsymbol{\omega})=1+\max_{\ell=1,\dots,n-1}\{h_\ell(\boldsymbol{\omega})\}$, $v_\ell(\boldsymbol{\omega})=v_0(\boldsymbol{\omega})-h_\ell(\boldsymbol{\omega})$ for $\ell=1,\dots,n-1$ and $h_\ell(\boldsymbol{\omega})$ is a function of the random vector $\boldsymbol{\omega}$ (see \cite{ClauHolRene_2016,LukMartHolgRene_2018}).

\begin{figure}
    \centering
\begin{minipage}{3cm}
   \begin{tikzpicture}
\node at (0,0) (n1) {};
\node at (0,-1) (n2) {};
\node at ({-sqrt(3)/2},{-3/2}) (n3) {};
\node at ({sqrt(3)/2},{-3/2}) (n4) {};

\filldraw[black] (n1) circle (3pt);
\filldraw[color=black,very thick,fill=white] (n2) circle (3pt);
\filldraw[color=black,very thick,fill=white] (n3) circle (3pt);
\filldraw[color=black,very thick,fill=white] (n4) circle (3pt);

\path[black,very thick] (n1)edge(n2);
\path[black,very thick] (n2)edge(n3);
\path[black,very thick] (n2)edge(n4);
\end{tikzpicture}
\end{minipage} \hspace{2cm} \begin{minipage}{3cm}
   \begin{tikzpicture}
\node at (0,0) (n1) {};

\node at (0,1) (n2) {};
\node at ({sqrt(3)/2},{1/2+1}) (n5) {};
\node at ({-sqrt(3)/2},{1/2+1}) (n6) {};
\node at ({-sqrt(3)},{2}) (n7) {};

\node at ({-sqrt(3)/2},{-1/2}) (n3) {};
\node at ({-sqrt(3)},{-1}) (n8) {};
\node at ({-sqrt(3)},{-2}) (n9) {};
\node at ({-3*sqrt(3)/2},{-1/2}) (n10) {};

\node at ({sqrt(3)/2},{-1/2}) (n4) {};
\node at ({sqrt(3)/2},{-3/2}) (n11) {};
\node at ({sqrt(3)},{0}) (n12) {};

\filldraw[black] (n1) circle (3pt);
\filldraw[color=black,very thick,fill=white] (n2) circle (3pt);
\filldraw[color=black,very thick,fill=white] (n3) circle (3pt);
\filldraw[color=black,very thick,fill=white] (n4) circle (3pt);
\filldraw[color=black,very thick,fill=white] (n5) circle (3pt);
\filldraw[color=black,very thick,fill=white] (n6) circle (3pt);
\filldraw[color=black,very thick,fill=white] (n7) circle (3pt);
\filldraw[color=black,very thick,fill=white] (n8) circle (3pt);
\filldraw[color=black,very thick,fill=white] (n9) circle (3pt);
\filldraw[color=black,very thick,fill=white] (n10) circle (3pt);
\filldraw[color=black,very thick,fill=white] (n11) circle (3pt);
\filldraw[color=black,very thick,fill=white] (n12) circle (3pt);

\path[black,very thick] (n1)edge(n2);
\path[black,very thick] (n2)edge(n5);
\path[black,very thick] (n2)edge(n6);
\path[black,very thick] (n6)edge(n7);

\path[black,very thick] (n1)edge(n3);
\path[black,very thick] (n3)edge(n8);
\path[black,very thick] (n8)edge(n9);
\path[black,very thick] (n8)edge(n10);

\path[black,very thick] (n1)edge(n4);
\path[black,very thick] (n4)edge(n11);
\path[black,very thick] (n4)edge(n12);

\end{tikzpicture}
\end{minipage}
    \caption{The left-hand subfigure represents a graph with $n=4$ nodes for the gas problem. The right-hand subfigure represents a graph with $n=12$ nodes.}
    \label{fig:trees}
\end{figure}
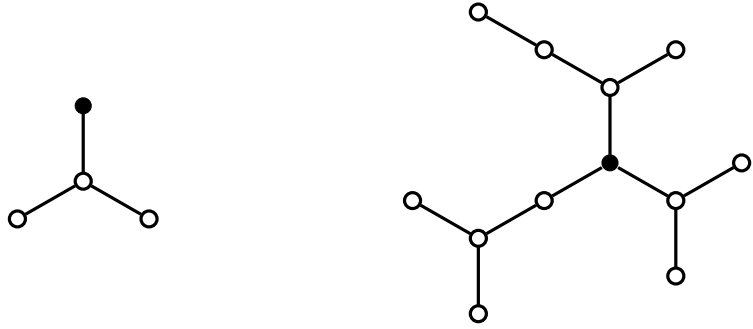 

The considered chance-constrained model is as follows
$$
\left\{
\begin{array}{lllll}
    \underset{x\geq 1}{\min} & \sum_{\ell=0}^{n-1} \;x_\ell\\
    \text{s.t.}& \displaystyle\sum_{j=1}^N\frac{1}{N}\max\left\{\psi_\theta\big( G_0(x,\omega^j)\big),\dots,\psi_\theta\big( G_{n-1}(x,\omega^j)\big)\right\}\leq \alpha,
\end{array}
\right.
$$
where the set of values $\{h_\ell(\omega^j)\}_{j,\ell}$ have been provided by  \cite{LukMartHolgRene_2018}. Thus, the problem  fits \eqref{pbm:sum_max} as $X=\{x\in\R^n:x\geq 1\}$ is a convex closed set. 
We solve it employing the model described in \eqref{mod:sum_max} and compare our results against those obtained from the DC approach of \cite{Javal_2021}. This reference
approximates the problem by a DC-constrained optimization model and applies the DC bundle solver {\tt PBMDC2}. 

Table~\ref{tab:gas} reports the obtained results for two instances with $n=4$ and $n=12$ nodes. The following parameter values were set in Algorithm~\ref{algo}:  $\kappa=0.3$, $\lambda=0.1$, $\mu_0=2$ and $\tol=10^{-6}$.

\begin{table}[htb]
\centering
\begin{tabular}{ccccccccc}
$n$  & \multicolumn{2}{c}{Iter} & SS    & \multicolumn{2}{c}{CPU} &  
     $c(\x )$             & \multicolumn{2}{c}{$f(\x)$}    \\ \hline \hline
     &\tiny{Algorithm \ref{algo}}       & \tiny{PBMDC2}      &       & \tiny{Algorithm \ref{algo}}      & \tiny{PBMDC2}     &                         & \tiny{Algorithm \ref{algo}}      & \tiny{PBMDC2}     \\ \cline{2-3} \cline{5-6} \cline{8-9}                                           $4$  & 3612       & 190        & 3600  & 113s       & 53s       & $-4.33\times 10^{-2}$             & $739.811$  & $738.797$       \\ 
$12$ & 18185      & 956         & 18184 & 11min      & 9min      & $-1.26\times 10^{-4}$ & $3146.584$ & $3142.05$                                              \\ \hline                                          
\end{tabular}
\caption{Computed results for the Gas Problem for $n=4$ and $n=12$ compared to those provided by Algorithm {\tt PBMDC2} from \cite{Javal_2021}. The Iter and SS values show the number of iterations and serious steps computed. Here, $\x$ is the last iterate computed by the algorithms.}
\label{tab:gas}
\end{table}

We have run {\tt PBMDC2} with the same parameters described in~\cite{Javal_2021}. We refrain from comparing our algorithm against the approach of \cite{LukMartHolgRene_2018} since it has been shown in \cite{Javal_2021} that  {\tt PBMDC2} performs better for this problem. Although our method is more general, Table \ref{tab:gas} shows that it provides good results compared to a more specialized algorithm.

\subsection{Buffered Probability Optimization Model}
We consider a buffered failure probability constraint on an integrable random variable ${\boldsymbol \xi}$ and recall that, for $\alpha \in (0,1)$ one has \cite{Rockafellar_Royset_2010}:  
\[
\mbox{
buffered probability of ${\boldsymbol \xi} > 0$ is no greater than $1-\alpha$  if and only if    $ {\tt AVaR}_{\alpha}({\boldsymbol \xi})\leq 0$,
}
\]
where ${\tt AVaR}_{\alpha}({\boldsymbol \xi})$  is the \emph{Average value-at-Risk} of ${\boldsymbol \xi}$, given by 
\[
\min_{t\in \Re}\; t +\frac{1}{1-\alpha}\E[\max\col{0,{\boldsymbol \xi}-t}].
\]
For further details about buffered failure probabilities, we refer to \cite[\S\, 3.E]{RoysetWets_2022} and \cite{Mafusalov_Uryasev_2018}.
%
%

Let us consider that ${\boldsymbol \xi}=\max_{\ell=1,\dots,m}\c_\ell(y;{\boldsymbol\omega})$, with $\c_j(\cdot;{\boldsymbol\omega})$ being a weakly-concave function.
After sampling according to the distribution of ${\boldsymbol\omega}$, as we did in \eqref{eq:prob}, we obtain the following expression for the constraint   $ {\tt AVaR}_{\alpha}({\boldsymbol \xi})\leq 0$
$$
c(y,t)=-t\frac{\alpha }{1-\alpha} + \sum_{j=1}^N \frac{p_j}{1-\alpha}\;\max\{t,\c_1(y,\omega^j),\dots,\c_m(y,\omega^j)\}\leq 0.
$$
Denoting $c_1(y,t)=-t\frac{\alpha }{1-\alpha}$ and $c_2(y,t)=\sum_{j=1}^N \frac{p_j}{1-\alpha}\;\max\{t,\c_1(y,\omega^j),\dots,\c_m(y,\omega^j)\}$, we end up in a suitable expression for problem \eqref{pbm:sum_max}, with no need of a second layer of approximation. 
It is worth noting that $ {\tt AVaR}_{\alpha}({\boldsymbol \xi})$  operator possesses a relevant characteristic of being level-bounded on $X\times \R$.
This ensures that the sequence generated by the algorithm is bounded. 
\paragraph{Design of cantilever beam-bar system:}\label{pb:beam}

This example investigates an optimal design of a cantilever beam-bar system illustrated in Figure~\ref{fig:cantilever}. \wlo{For a detailed explanation of how this example is constructed, see \cite[Ex. 3]{Song_2003}}. The system consists of an ideally plastic
cantilever beam of moment capacity $ \boldsymbol{\omega}_M+y_M$ and length $2L$, with $L=5$, which is
propped by an ideally rigid-brittle bar of strength $\boldsymbol{\omega}_T+y_T$. The structure
is subjected to load $\boldsymbol{\omega}_P$ that is applied in the middle of the bar. 
The moment capacity, strength, and load are assumed to be Gaussian random variables: 
$\boldsymbol{\omega}_M + y_M \sim N(y_M,300)$,
$\boldsymbol{\omega}_T + y_T \sim N(y_T,20)$, and
$\boldsymbol{\omega}_P \sim N(150,30)$. 
 In this example, we optimize the two
design variables $y_M \in [500, 1500]$ and $y_T \in [50, 150]$, representing the expected values of the moment capacity and the strength, respectively. The cost function is $2y_M+y_T$.
Then, the system can be described using the following limit-state functions determining whether a component fails or survives:
\begin{align*}
    g_1(y,\boldsymbol{\omega})  := & -(y_T + \boldsymbol{\omega}_T) + \frac{5}{16}\boldsymbol{\omega}_P,\\
    g_2(y,\boldsymbol{\omega})  := & \vphantom{\frac11} -(y_M + \boldsymbol{\omega}_M) + L\boldsymbol{\omega}_P,\\
    g_3(y,\boldsymbol{\omega})  := & -(y_M + \boldsymbol{\omega}_M) + \frac{3L}{8}\boldsymbol{\omega}_P,\\
    g_4(y,\boldsymbol{\omega})  := & -(y_M + \boldsymbol{\omega}_M) + \frac{L}{3}\boldsymbol{\omega}_P,\\
    g_5(y,\boldsymbol{\omega})  := & \vphantom{\frac11} -(y_M + \boldsymbol{\omega}_M) - 2L(y_T + \boldsymbol{\omega}_T) + L\boldsymbol{\omega}_P.
\end{align*}

\begin{figure}
    \centering
    \includegraphics[scale=0.5]{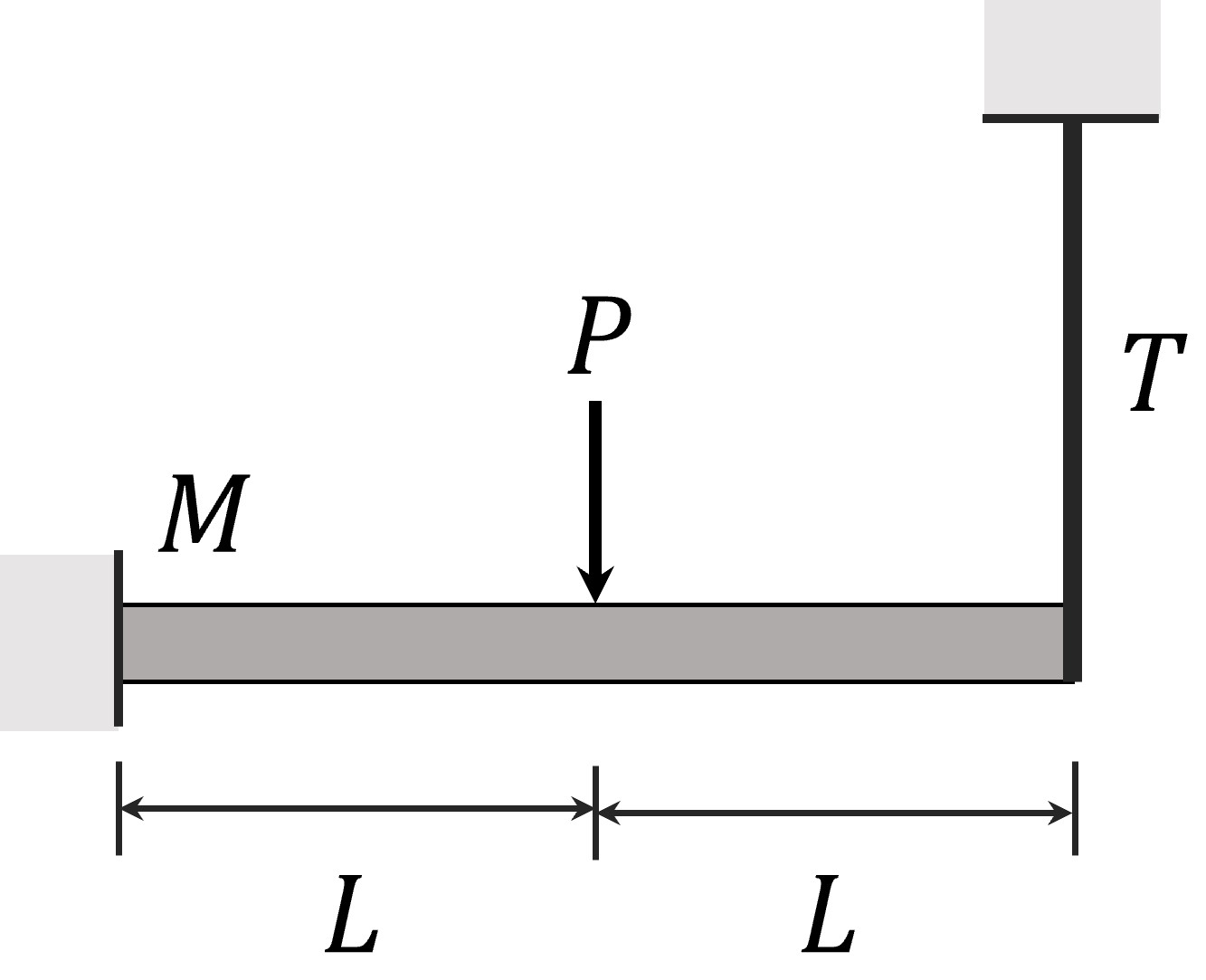}
    \caption{Example cantilever beam-bar system. Figure extracted from \cite{Byun_Oliveira_Royset_2023}.}
    \label{fig:cantilever}
\end{figure}

The system's failure can be thus represented as $\max\{G_1(y,{\omega^j}),G_2(y,{\omega^j}),G_3(y,{\omega^j})\}> 0$, with
\begin{align*}
    G_1(y,\boldsymbol{\omega})  := &\min\{g_1(y,\boldsymbol{\omega}),g_2(y,\boldsymbol{\omega})\},\\
    G_2(y,\boldsymbol{\omega})  := & \min\{g_3(y,\boldsymbol{\omega}),g_4(y,\boldsymbol{\omega})\},\\
    G_3(y,\boldsymbol{\omega})  := & \min\{g_3(y,\boldsymbol{\omega}),g_5(y,\boldsymbol{\omega})\}.
\end{align*}
Our buffered probability model for the problem  reads as
$$
\left\{\begin{array}{llll}
    \underset{(y,t)\in X}{\min} & 2y_M + y_T\\
    \text{s.t.} & \displaystyle -t\frac{\alpha}{1-\alpha} + \sum_{j=1}^N\frac{1}{N(1-\alpha)}\max\left\{ G_1(y,\omega^j), G_2(y,\omega^j),G_3(y,\omega^j),t\right\}\leq 0,
\end{array}\right.
$$
where $\omega^j$ is a realization of the random vector $\boldsymbol{\omega}=(\boldsymbol{\omega}_M,\boldsymbol{\omega}_T,\boldsymbol{\omega}_P)$, $y=(y_M,y_T)$ and  
$$
X=\left\{(y,t)\in\R^3:\begin{array}{l}
    500\leq y_M \leq 1500\\
    50\leq y_T \leq 150
\end{array}\right\}.
$$

We consider the model described in \eqref{mod:sum_max} to solve problems such as \eqref{pbm:sum_max}. For this problem, we have considered a sample with $N=100000$ scenarios and $\alpha=0.999$. The algorithm's parameters were set as $\kappa=0.3$, $\lambda=0.1$, $\mu_0=\kappa$ and $\tol=10^{-6}$. The algorithm stops after 35 seconds and 183 iterations (all of them Serious Steps). The computed feasible point $(1288,150)$ has a cost of $2727$, which is slightly smaller (under $0.1$\% of relative error) than the reference value $2729$ for the feasible point $(1289,150)$, issued by a grid search on the set $X$ with $1000\times 100$ points. Such grid search took about 240 seconds of computation.

\wlo{
\section{Conclusions}\label{sec:conc}
This paper presents an algorithmic framework for broad classes of nonsmooth and nonconvex problems using an improvement function. We avoid penalization and the associated tuning of parameters, and instead rely on subproblems minimizing an improvement function. Under mild conditions, we prove convergence of the resulting algorithm to an FM-critical point and discuss the numerous strategies for building an approximate model for improvement functions. Moreover, the inner subproblem of the algorithm can be approximately solved, for which we also suggest a bundle-like algorithm in Appendix \ref{ap:bundle}. In a numerical section, we demonstrate that the algorithm achieves satisfactory results compared to more specialized methods. Future research directions include a study of rates of convergence, analysis of algorithmic parameters, and more extensive numerical testing.
}
\begin{acknowledgements}
The second author acknowledges financial support from the Gaspard-Monge program for Optimization and Operations Research (PGMO) project ``SOLEM - Scalable Optimization for Learning and Energy Management".
The third author acknowledges support from the Office of Naval Research under grant N00014-24-1-2318.
\end{acknowledgements}


\appendix

\section{On the (Inexact) Solution of Subproblems: a Prox-form of Bundle Method}\label{ap:bundle}
In this section, we rely on \cite{Correa_Lemarechal_1993} and show how a bundle-like algorithm can be deployed for computing an $\epsilon^k$-solution $y^k$ of~\eqref{spbm} for the class of models $\Model$ holding Assumption \ref{assump-model}. \wlo{For a deeper analysis on the rates of convergence of this algorithms we rely on \cite{DiazGrimmer_2023}}. Note that, in general, the chosen model need not be convex. Following the idea in Remark \ref{remark:subproblems}, we propose an algorithm to solve every model $\Model_a(\cdot;x^k)$, which happens to be convex.

In what follows, let $k$ be a fixed iteration of Algorithm \ref{algo}, $x^k$ be the current point, and $\Model_a(\cdot;x^k)$ for a fixed $a\in A(x^k)$. In  the following approach, iterations will be denoted by $m$: our goal is to generate a sequence of \emph{inner iterates} $\{y^{k,m}_a\}_m\subset X$ with the following properties:
\begin{enumerate}[label=(\alph*)]
    \item if $x^k$ solves~\eqref{eq:subproblems}, then $\lim_{m\to \infty} y^{k,m}_a=x^k$;
    \item if $x^k$ is not a solution to~\eqref{eq:subproblems}, then $\{y^{k,m}_a\}_m$ is finite and its last term, denoted by $y^k_a$, is an $\epsilon^k_a$-solution of~\eqref{eq:subproblems}, with $\epsilon^k_a$ satisfying 
    $
    \epsilon^k_a\leq\frac{\lambda}{2}\norm{y^k_a-x^k}^2.
    $

\end{enumerate}
To this end, at every inner iteration $m$, we approximate a convex function $\Model_a(\cdot;x^k)$ with a cutting-plane model $\H^{k,m}_a\leq \Model_a(\cdot;x^k)$ and define $y^{k,m+1}_a$  as 
\begin{equation}\label{spbm:app}
y^{k,m+1}_a:=\arg\min_{x\in X}\;\H^{k,m}_a(x)+\frac{\mu^k}{2}\norm{x-x^k}^2.
\end{equation}
As we will precise below, $\H^{k,m}_a$ is given by at most $m$ pieces. Thus, if $X$ is polyhedral, then the above problem is a strictly convex quadratic programming problem, for which efficient (open source and commercial) solvers exist.
As standard in bundle methods (see for instance \cite{Oliveira_Solodov_2020,Frangioni_2020}) the number of cuts in the model $\H^{k,m}_a$ can be bounded. Indeed, only two pieces are enough to have a convergent algorithm. In more general terms, any rule to update $\H^{k,m}_a$ to $\H^{k,m+1}_a$ is a valid one provided that the following inequalities are satisfied:
    \begin{equation}\label{cond:app}
    \begin{matrix}
    \hfill\H^{k,m+1}_a\leq&\Model_a(\cdot;x^k),\\
    \H^{k,m}_a(y^{k,m+1}_a)+\mu^k\inner{x^k-y^{k,m+1}_a}{\cdot-y^{k,m+1}_a}\leq& \H^{k,m+1}_a,\\
    \hfill\Model_a(y^{k,m+1}_a,x^k)+\inner{s^{k,m}_a}{\cdot-y_a^{k,m+1}}\leq&\H^{k,m+1}_a,
    \end{matrix}
 \end{equation}
     where $s^{k,m}_a\in\partial\Model_a(\cdot;x^k)(y^{k,m}_a)$ is an arbitrary subgradient. \wlo{The above three conditions are standard to updating a model. The first one implies that the models underestimate the convex function $\Model_a(\cdot;x^k)$. The two last, seek the updated model $\Model_a^{k,m+1}$ to be tighter than the linearizations of the previous model (note that $\mu^k(x^k-y_a^{k,m+1})\in(\Model_a^{k,m}+{\rm i}_X)(y_a^{k,m+1})$) and the function.}

\begin{theorem}[{\cite[Proposition 4.3]{Correa_Lemarechal_1993}}]\label{Th:C-L}
    Consider a sequence of convex models $\{\H^{k,m}_a\}_m$ such that conditions \eqref{cond:app} hold and sequence $\{y^{k,m}_a\}_m$ given by \eqref{spbm:app}. Then,
       \[
       \lim_{m \to \infty } \Model_a(y^{k,m+1}_a;x^k)-\H_a^{k,m}(y^{k,m+1}_a)=0\quad \mbox{and}\quad \lim_{m \to \infty } y^{k,m}_a=\bar y^k_a, 
       \]
       where $\bar y^k_a$ is the unique solution to \eqref{eq:subproblems}.
\end{theorem}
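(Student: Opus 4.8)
The plan is to recast the recursion in terms of the augmented objective and its piecewise‑linear surrogates and then run the classical monotonicity‑plus‑aggregation argument. Dropping the fixed indices (writing $y^m$ for $y^{k,m}_a$), set $\phi:=\Model_a(\cdot;x^k)$, $\psi:=\phi+\frac{\mu^k}{2}\norm{\cdot-x^k}^2$ and $\Psi^m:=\H^{k,m}_a+\frac{\mu^k}{2}\norm{\cdot-x^k}^2$, so that by \eqref{spbm:app} one has $y^{m+1}=\argmin_{x\in X}\Psi^m(x)$ and $\bar y^k_a=\argmin_{x\in X}\psi$. Since $\mu^k>0$, both $\psi$ and every $\Psi^m$ are $\mu^k$‑strongly convex, so $\bar y^k_a$ is unique and $v^m:=\min_{x\in X}\Psi^m=\Psi^m(y^{m+1})$ is attained. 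The first line of \eqref{cond:app} gives $\H^{k,m}_a\le\phi$, hence $\Psi^m\le\psi$ and thus $v^m\le\psi(\bar y^k_a)$ for all $m$. I would then extract the aggregate minorant hidden in the second line of \eqref{cond:app}: with $L^m(x):=\H^{k,m}_a(y^{m+1})+\mu^k\inner{x^k-y^{m+1}}{x-y^{m+1}}+\frac{\mu^k}{2}\norm{x-x^k}^2$, a direct gradient computation gives $\nabla L^m(x)=\mu^k(x-y^{m+1})$, so $L^m$ is $\mu^k$‑strongly convex with unconstrained (hence $X$‑) minimizer $y^{m+1}$, whence $\min_{x\in X}L^m=v^m$. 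The second line of \eqref{cond:app} (adding the prox term) yields $L^m\le\Psi^{m+1}$, so $v^m\le v^{m+1}$; being bounded above, $v^m\to v^\star\le\psi(\bar y^k_a)$. Strong convexity of $L^m$ further gives $v^{m+1}\ge L^m(y^{m+2})\ge v^m+\frac{\mu^k}{2}\norm{y^{m+2}-y^{m+1}}^2$, which telescopes to $\sum_m\norm{y^{m+1}-y^m}^2<\infty$ and in particular $\norm{y^{m+1}-y^m}\to0$.

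Next I would secure uniform boundedness. Testing $\Psi^m$ at $x^k\in X$ and using strong convexity, $v^m+\frac{\mu^k}{2}\norm{x^k-y^{m+1}}^2\le\Psi^m(x^k)=\H^{k,m}_a(x^k)\le\phi(x^k)$, so $\norm{y^{m+1}-x^k}^2\le\frac{2}{\mu^k}(\phi(x^k)-v^0)$ is bounded uniformly in $m$. Since $\{y^m\}$ lies in a bounded set and $\partial\phi$ is locally bounded, all cut slopes building $\H^{k,m}_a$ (function subgradients and aggregate slopes $\mu^k(x^k-y^j)$) are uniformly bounded, hence so is some $g^{m+1}\in\partial\Psi^{m+1}(y^{m+1})$. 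The third line of \eqref{cond:app}, evaluated at $y^{m+1}$, forces exactness $\H^{k,m+1}_a(y^{m+1})=\phi(y^{m+1})$, i.e. $\Psi^{m+1}(y^{m+1})=\psi(y^{m+1})$. Writing the gap as $\delta^m:=\phi(y^{m+1})-\H^{k,m}_a(y^{m+1})=\psi(y^{m+1})-v^m=\bigl[\Psi^{m+1}(y^{m+1})-v^{m+1}\bigr]+\bigl[v^{m+1}-v^m\bigr]$ and bounding the first bracket by convexity, $\Psi^{m+1}(y^{m+1})-\Psi^{m+1}(y^{m+2})\le\inner{g^{m+1}}{y^{m+1}-y^{m+2}}\le\norm{g^{m+1}}\,\norm{y^{m+1}-y^{m+2}}\to0$, both brackets vanish and $\delta^m\to0$, which is the first assertion.

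Finally I would deduce convergence of the iterates. As $\{y^{m+1}\}$ is bounded, any cluster point $\hat y$ arises as $y^{m_j+1}\to\hat y$; continuity of $\phi$ with $\delta^{m_j}\to0$ gives $v^{m_j}=\H^{k,m_j}_a(y^{m_j+1})+\frac{\mu^k}{2}\norm{y^{m_j+1}-x^k}^2\to\psi(\hat y)$, so $v^\star=\psi(\hat y)$. Because $v^\star\le\psi(\bar y^k_a)\le\psi(\hat y)=v^\star$, we obtain $\psi(\hat y)=\psi(\bar y^k_a)$, and uniqueness of the strongly convex minimizer forces $\hat y=\bar y^k_a$. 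Every cluster point thus equals $\bar y^k_a$, so the bounded sequence converges, $y^m\to\bar y^k_a$, the unique solution of \eqref{eq:subproblems}.

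The main obstacle is the correct bookkeeping of the three abstract update rules \eqref{cond:app}: recognizing that the second encodes an aggregate affine minorant whose prox‑augmented form $L^m$ is strongly convex with known minimizer $y^{m+1}$ — a single observation that simultaneously delivers the monotonicity $v^m\le v^{m+1}$ and the step‑vanishing estimate — while the third supplies exactness of the fresh model at $y^{m+1}$. Everything else (boundedness via testing at $x^k$, the uniform subgradient bound, and transferring the vanishing gap to the value limit) is routine once these two structural facts and the uniqueness of the strongly convex minimizer are in place.
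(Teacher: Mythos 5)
Your proof is correct, but note that the paper does not actually prove this statement: it is imported wholesale from Correa and Lemar\'echal (their Proposition 4.3), so there is no internal proof to compare against. What you have written is a correct, self-contained reconstruction of the classical argument, and its skeleton matches the cited one: the aggregate affine minorant in the second line of \eqref{cond:app}, prox-augmented into your $L^m$ with minimizer $y^{m+1}$, yields both monotonicity of the prox-values $v^m$ and the square-summability of the steps; the first and third lines give the sandwich $\H^{k,m+1}_a\le\Model_a(\cdot;x^k)$ with equality at $y^{m+1}$, from which the gap $\delta^m$ vanishes; strong convexity and uniqueness of $\bar y^k_a$ finish the job. One step deserves tightening: you justify the uniform bound on $g^{m+1}\in\partial\Psi^{m+1}(y^{m+1})$ by appealing to ``the cut slopes building $\H^{k,m}_a$'', but under the abstract hypotheses \eqref{cond:app} the model need not be a finite max of function and aggregate cuts --- it is any convex function satisfying the three inequalities. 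The bound you need nevertheless follows in one line from the sandwich you already established: since $\H^{k,m+1}_a\le\Model_a(\cdot;x^k)$ everywhere with equality at $y^{m+1}$, any $g\in\partial\H^{k,m+1}_a(y^{m+1})$ satisfies $\Model_a(x;x^k)\ge\Model_a(y^{m+1};x^k)+\inner{g}{x-y^{m+1}}$ for all $x$, i.e., $\partial\H^{k,m+1}_a(y^{m+1})\subseteq\partial\Model_a(\cdot;x^k)(y^{m+1})$, and the latter is uniformly bounded on the bounded set containing $\{y^m\}_m$ because the finite-valued convex function $\Model_a(\cdot;x^k)$ is locally Lipschitz; adding $\mu^k(y^{m+1}-x^k)$ then bounds $g^{m+1}$. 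Finally, your reading of the third condition as exactness of the fresh model at $y^{m+1}$ is the intended one: the paper's $s^{k,m}_a\in\partial\Model_a(\cdot;x^k)(y^{k,m}_a)$ paired with a linearization anchored at $y^{k,m+1}_a$ is an index slip, and the subgradient should be taken at $y^{k,m+1}_a$, exactly as you use it.
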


Given these ingredients, we now present the following algorithm for inexactly solving~\eqref{spbm}.

\begin{algorithm}[htb]
\caption{\sc prox-form of bundle method}
\label{algo app}
\begin{algorithmic}[1]
{\footnotesize
\State  Let $x^k,\,\mu^k,\,\kappa,\,\lambda$, and $\tol$ be provided by Algorithm~\ref{algo} at iteration $k$
\State Set $y^{k,0}_a=x^k$ and define $\H^{k,0}_a:=\Model_a(y_a^{k,0};x^k) + \inner{s_a^{k,0}}{\cdot -y_a^{k,0}}$, with $s_a^{k,0}\in \partial \Model_a(\cdot;x^k)(y_a^{k,0})$
\Statex
\For{$m = 0, 1, 2, \dots$}
\State Let $y^{k,m+1}_a$ be the unique solution of 
    \begin{equation*}
    \min_{x \in X} \; \H_a^{k,m}(x) + \frac{\mu^k}{2}\norm{x-x^k}^2
    \end{equation*}
\If{$\Model_a(x^k;x^k)-\H_a^{k,m}(y_a^{k,m+1})\leq \tol$}
\State Stop and return $y^k_a:=x^k$\label{lin:xk}
\EndIf    
\If{$\Model_a(y_a^{k,m+1};x^k)-\H_a^{k,m}(y_a^{k,m+1})\leq \frac{\lambda}{2}\norm{y_a^{k,m+1}-x^k}^2$}\label{sstep}
\State Stop and return $y_a^k:=y_a^{k,m+1}$\label{lin:ykm}
\EndIf
\Statex
\State Update the model $\H_a^{k,m}$ to $\H_a^{k,m+1}$ in order to satisfy \eqref{cond:app}
\EndFor
}
  \end{algorithmic}
\end{algorithm}

Let  $s_a^{k,j} \in \partial \Model_a(\cdot;x^k)(y_a^{k,j})$, $j=1,\ldots,m$.
As standard in bundle methods, a practical rule to update the model is as follows:
\[
\H_a^{k,m+1}(\cdot):= \max_{j \in J_{m+1}}\{\Model_a(y^{k,j};x^k)+ \inner{s_a^{k,j}}{\cdot - y_a^{k,j}}\},
\]
where $J_{m+1}:=\{{J^{act}_m,m+1}\}$, and $J^{act}_m \subset J_m\subset \{0,1,\ldots,m\}$ is the index set of active constraints of the following reformulation of problem~\eqref{spbm}:
\[
\left\{
\begin{array}{lll}
\displaystyle\min_{y\in X,r\in \R}& r + \frac{\mu^k}{2}\norm{x-x^k}^2\\
\mbox{s.t.} & \Model_a(y_a^{k,j};x^k)+ \inner{s_a^{k,j}}{x - y_a^{k,j}}\leq r& \forall\, j \in J_m.
\end{array}
\right.
\]
It is not difficult to show that this rule satisfies~\eqref{cond:app}. \wlo{Check \cite[Remark 4.2]{Correa_Lemarechal_1993} to have a better insight on this type of models.}

\begin{theorem}\label{th:subconv}
    Consider Algorithm~\ref{algo app} with $\tol=0$ applied to problem~\eqref{eq:subproblems}.
    \begin{enumerate}[itemsep=2pt, topsep=2pt, leftmargin=1.75cm,label=(\roman*)]
       \item If the algorithm loops indefinitely, or it stops at line \ref{lin:xk}, then  $x^k$ is a solution to~\eqref{eq:subproblems}.
       
        \item If the algorithm halts at line \ref{lin:ykm}, then $y^k_a:=y_a^{k,m+1}$ is an $\epsilon^k_a$-solution of~\eqref{eq:subproblems}, with $\epsilon_a^k:= \Model_a(y_a^{k,m+1};x^k)-\H_a^{k,m}(y_a^{k,m+1})\geq 0$ satisfying $\epsilon^k_a\leq\frac{\lambda}{2}\norm{y^k_a-x^k}^2$.
        \item  Suppose that $x^k$ does not solve~\eqref{eq:subproblems}.
    Then Algorithm \ref{algo app} stops after finitely many iterations.
    \end{enumerate}
\end{theorem}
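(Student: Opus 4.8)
The plan is to derive all three items from two ingredients: the Correa--Lemar\'echal convergence result Theorem~\ref{Th:C-L}, and the elementary prox-optimality inequality that, because $x^k\in X$ is feasible for the inner subproblem while $\H^{k,m}_a\leq\Model_a(\cdot;x^k)$ by~\eqref{cond:app} and $\H^{k,0}_a(x^k)=\Model_a(x^k;x^k)$,
\[
\H^{k,m}_a(y^{k,m+1}_a)+\frac{\mu^k}{2}\norm{y^{k,m+1}_a-x^k}^2\;\leq\;\H^{k,m}_a(x^k)\;\leq\;\Model_a(x^k;x^k).
\]
I would treat the items in the order (ii), (i), (iii), since (ii) is purely computational, (i) carries the real content, and (iii) is essentially the contrapositive of (i).

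For item (ii) I would start from the returned point $y^k_a=y^{k,m+1}_a$ and write $\Model_a(y^k_a;x^k)=\epsilon^k_a+\H^{k,m}_a(y^k_a)$, with $\epsilon^k_a\geq0$ because $\H^{k,m}_a\leq\Model_a(\cdot;x^k)$. Since $y^k_a$ minimizes $\H^{k,m}_a+\tfrac{\mu^k}{2}\norm{\cdot-x^k}^2$ over $X$, for every $x\in X$ one has $\H^{k,m}_a(y^k_a)+\tfrac{\mu^k}{2}\norm{y^k_a-x^k}^2\leq\H^{k,m}_a(x)+\tfrac{\mu^k}{2}\norm{x-x^k}^2\leq\Model_a(x;x^k)+\tfrac{\mu^k}{2}\norm{x-x^k}^2$, which yields $\Model_a(y^k_a;x^k)+\tfrac{\mu^k}{2}\norm{y^k_a-x^k}^2\leq\epsilon^k_a+\min_{x\in X}[\Model_a(x;x^k)+\tfrac{\mu^k}{2}\norm{x-x^k}^2]$, i.e. $y^k_a$ is an $\epsilon^k_a$-solution; the test at line~\ref{sstep} then supplies $\epsilon^k_a\leq\tfrac{\lambda}{2}\norm{y^k_a-x^k}^2$.

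For item (i) there are two subcases. If the algorithm stops at line~\ref{lin:xk} with $\tol=0$, the test $\Model_a(x^k;x^k)\leq\H^{k,m}_a(y^{k,m+1}_a)$ combined with the displayed prox inequality squeezes the entire chain into equalities; in particular $\tfrac{\mu^k}{2}\norm{y^{k,m+1}_a-x^k}^2=0$, so $y^{k,m+1}_a=x^k$, and $\H^{k,m}_a(x^k)=\Model_a(x^k;x^k)$. The optimality condition of the inner prox-subproblem at its minimizer $x^k$ then reads $0\in\partial\H^{k,m}_a(x^k)+N_X(x^k)$; since $\H^{k,m}_a$ underestimates $\Model_a(\cdot;x^k)$ and touches it at $x^k$, any $g\in\partial\H^{k,m}_a(x^k)$ also lies in $\partial\Model_a(\cdot;x^k)(x^k)$, whence $0\in\partial\Model_a(\cdot;x^k)(x^k)+N_X(x^k)$ and $x^k$ solves~\eqref{eq:subproblems} by convexity. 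If instead the algorithm loops forever, Theorem~\ref{Th:C-L} gives $\Model_a(y^{k,m+1}_a;x^k)-\H^{k,m}_a(y^{k,m+1}_a)\to0$ and $y^{k,m}_a\to\bar y^k_a$, the unique solution; the non-firing of the test at line~\ref{sstep} forces $\tfrac{\lambda}{2}\norm{y^{k,m+1}_a-x^k}^2<\Model_a(y^{k,m+1}_a;x^k)-\H^{k,m}_a(y^{k,m+1}_a)\to0$, hence $y^{k,m+1}_a\to x^k$ and $\bar y^k_a=x^k$, so again $x^k$ solves~\eqref{eq:subproblems}.

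Item (iii) then follows by contraposition: if $x^k$ did not solve~\eqref{eq:subproblems}, item (i) shows the algorithm can neither stop at line~\ref{lin:xk} nor loop indefinitely, so it must halt at line~\ref{lin:ykm} after finitely many inner iterations. I expect the main obstacle to be the looping subcase of item (i): the estimate $\norm{y^{k,m+1}_a-x^k}\to0$ requires the threshold $\tfrac{\lambda}{2}\norm{\cdot-x^k}^2$ to be strictly dominated by the vanishing gap, which genuinely needs $\lambda>0$ (for $\lambda=0$ the inner iterates may approach $\bar y^k_a$ without the test ever firing in finitely many steps), and the subgradient-touching step in the line~\ref{lin:xk} subcase must be justified carefully in order to pass from $\partial\H^{k,m}_a(x^k)$ to $\partial\Model_a(\cdot;x^k)(x^k)$.
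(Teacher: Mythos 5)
Your proposal is correct, and it rests on the same two ingredients as the paper's proof --- Theorem~\ref{Th:C-L} and the prox-optimality inequality $\H_a^{k,m}(y_a^{k,m+1})+\frac{\mu^k}{2}\norm{y_a^{k,m+1}-x^k}^2\leq \H_a^{k,m}(x^k)\leq \Model_a(x^k;x^k)$ --- with your item (ii) matching the paper's essentially verbatim; the differences lie in how you finish the other two items. For the stop at line~\ref{lin:xk}, the paper's chain keeps $\min_{y\in X}\{\Model_a(y;x^k)+\frac{\mu^k}{2}\norm{y-x^k}^2\}$ inside the squeeze, so once $\tol=0$ forces equalities it reads off directly that $\Model_a(x^k;x^k)$ equals that minimum, i.e.\ $x^k$ solves~\eqref{eq:subproblems}; your detour through $0\in\partial\H_a^{k,m}(x^k)+N_X(x^k)$ and the transfer of subgradients at the touching point is valid (the convex sum rule applies because the cutting-plane model is finite-valued, and $\mu^k\geq\kappa>0$ justifies $y_a^{k,m+1}=x^k$), but it is strictly unnecessary work. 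For item (iii), you argue by contraposition of item (i), whereas the paper proves it directly from Theorem~\ref{Th:C-L} (gap vanishes while $\frac{\lambda}{2}\norm{y_a^{k,m+1}-x^k}^2$ stays bounded away from zero, so line~\ref{sstep} fires); the content is identical, since your looping subcase of (i) is the paper's (iii) estimate run in reverse, and your version has the small merit of making the logical dependence between the items explicit. Finally, your caveat about $\lambda$ is well taken and applies equally to the paper: both your looping argument and the paper's proofs of the looping case of (i) and of (iii) need the vanishing gap to dominate $\frac{\lambda}{2}\norm{y_a^{k,m+1}-x^k}^2$, which is informative only when $\lambda>0$, while Algorithm~\ref{algo} nominally permits $\lambda=0$; in that case line~\ref{sstep} demands an exactly zero gap, the algorithm can loop forever at an $x^k$ that does not solve~\eqref{eq:subproblems} (line~\ref{lin:xk} never fires either, since $\H_a^{k,m}(y_a^{k,m+1})$ stays below $\Model_a(x^k;x^k)-\frac{\mu^k}{2}\norm{\bar y_a^k-x^k}^2$ asymptotically), and items (i) and (iii) fail --- so the theorem should be read with $\lambda>0$, a hypothesis the paper leaves implicit.
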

{\bf Proof.}
To prove item (i), note that if the algorithm stops at line~\ref{lin:xk}, then
\begin{align*}
 \H_a^{k,m}(y_a^{k,m+1})+  {\frac{\mu^k}{2}}\norm{y_a^{k,m+1}-x^k}^2  & =
     \min_{y \in X} \left\{ \H_a^{k,m}(y) + \frac{\mu^k}{2}\norm{y-x^k}^2\right\}\\
     &\leq  \min_{y \in X} \left\{ \Model_a(y;x^k) + \frac{\mu^k}{2}\norm{y-x^k}^2\right\}\\
     & \leq  \Model_a(x^k;x^k)\leq  \H_a^{k,m}(y_a^{k,m+1}) + \tol.
\end{align*}
In this case, $\tol =0$ implies that $y_a^{k,m+1}=x^k$ solves~\eqref{eq:subproblems}.
On the other hand, if the algorithm loops indefinitely, then
\[ \frac{\lambda}{2}\norm{y^{k,m+1}_a-x^k}^2 <    \Model_a(y_a^{k,m+1};x^k)-\H_a^{k,m}(y_a^{k,m+1})
\quad \forall\, m.
\]
Theorem~\ref{Th:C-L} then ensures that $\lim_{m \to \infty} y^{k,m}_a=x^k$ solves~\eqref{eq:subproblems}.

To show item (ii), note that by setting $\epsilon_a^k:=\Model_a(y_a^{k,m+1};x^k) -\H_a^{k,m}(y^{k,m+1}) \geq 0$ we get
\begin{align*}
\Model_a(y^{k,m+1}_a;x^k) + \frac{\mu^k}{2}\norm{y_a^{k,m+1}-x^k}^2 &=  \H^{k,m}_a(y_a^{k,m+1})+ {\frac{\mu^k}{2}}\norm{y_a^{k,m+1}-x^k}^2  + \epsilon^k_a\\ 
& =
     \min_{x \in X} \left\{ \H_a^{k,m}(x) + \frac{\mu^k}{2}\norm{x-x^k}^2\right\}+ \epsilon^k_a\\
     &\leq  \min_{x \in X} \left\{ \Model_a(x;x^k) + \frac{\mu^k}{2}\norm{x-x^k}^2\right\}+ \epsilon_a^k,
\end{align*}
i.e., $y^{k,m+1}_a$ is an $\epsilon_a^k$-solution to~\eqref{eq:subproblems}. By assumption the algorithm stops at line~\ref{lin:ykm}, then $\epsilon^k_a\leq \frac{\lambda}{2}\norm{y^{k,m+1}_a-x^k}^2$
and the result follows.

Finally, to prove item (iii), see that if $x^k$ does not solve~\eqref{eq:subproblems}, then Theorem \wlo{\ref{th:subconv}} ensures that  for every $\epsilon>0$ small enough, there exists $m_0\in \mathbb N$ such that
$$
\Model_a(y^{k,m+1}_a;x^k)-\H^{k,m}_a(y_a^{k,m+1})<\epsilon
\quad \mbox{and} \quad 
\epsilon<\frac{\lambda}{2}\norm{y^{k,m+1}_a-x^k}^2
\quad\mbox{for all }\; m\geq m_0,
$$
where $\lambda$ is the one in Algorithm \ref{algo app}. 
Hence, the stopping test at line~\ref{sstep} is triggered and the algorithm stops after finitely many steps.
\mybox

\begin{remark}
    We need to apply Algorithm \ref{algo app} for all $a\in A(x^k)$ in order to compute an $\epsilon^k_a$-solution to every problem \eqref{eq:subproblems}. Once all these iterations are done, we compute $y^k$ as
    $$
    y^k\in\arg\min_{y\in Y^k}\Model(y;x^k)+\frac{\mu^k}{2}\norm{y-x^k}^2,
    $$
    where $Y^k=\{y^k_a:a\in A(x^k)\}$ is a finite set. We can easily prove that $y^k$ is such that
    $$
    \Model(y^k;x^k)+\frac{\mu^k}{2}\norm{y^k-x^k}^2\leq \Model(y^k_a;x^k)+\frac{\mu^k}{2}\norm{y^k_a-x^k}^2\leq  \Model_a(y^k_a;x^k)+\frac{\mu^k}{2}\norm{y^k_a-x^k}^2\leq \Model_a(x;x^k)+\frac{\mu^k}{2}\norm{x-x^k}^2+\epsilon^k_a
    $$
    for all $a\in A(x^k)$ and $x\in X$. Then, minimizing over $x\in X$ and $a\in A(x^k)$ the right-hand side of the above chain of inequalities, we deduce
    $$
    \Model(y^k;x^k)+\frac{\mu^k}{2}\norm{y^k-x^k}^2\leq \min_{x\in X}\Model(x;x^k)+\frac{\mu^k}{2}\norm{x-x^k}^2+\epsilon^k,
    $$
     where $\epsilon^k=\min_{a\in A(x^k)}\epsilon^k_a$. As $y^k=y^k_{a^*}$ for a given $a^*\in A(x^k)$, we conclude that $y^k$ is an $\epsilon^k$-solution to problem \eqref{spbm} with 
     $$\epsilon^k\leq \epsilon^k_{a^*}\leq\frac{\lambda}{2}\norm{y^k-x^k}^2.$$\mytriangle
\end{remark}
\end{document}